\tikzstyle directed=[postaction={decorate,decoration={markings,
		mark=at position .65 with {\arrow{latex}}}}]
\def\remove#1 {{\color{red} \sout{#1}}}
\def \bb{\mathfrak b}
\def\m{\mathop}
\def\replace #1#2 { { \color{red} \tiny #1} {\color {blue} #2} } 
 \renewcommand{\theequation}{\thesection.\arabic{equation}}
\newtheorem{dref}{Definition}[section] 
\newtheorem{theo}[dref]{Theorem} 
\newtheorem{prop}[dref]{Proposition}
\newtheorem{remark}[dref]{Remark}
\def\Ccal{{\mathcal C}}
\newcommand{\len}{\ell}
\newcommand{\Pcal}{\mathcal{T}}
\newcommand{\Gcal}{\Gamma}
\newcommand{\Id}{\mathbb{I}_2}
\def\et{\widetilde{e}}
\def\eh{\widehat{e}}
\def\nt{\widetilde{n}}
\def\nh{\widehat{n}}
\def\gh{\h{g}}
\def\gt{\t{g}}
\def\Ctt{\widetilde{C}}
\def\Chh{\widehat{C}}
\def\Cp{C'}
\def\h{\widehat}
\def\t{\widetilde}
\def\g{\gamma}
\def\Mcal{\mathcal M}
\def\V{\mathcal V}
\def\g{\gamma}
\def\l{\lambda}
\def\be{\begin{equation}}
\def\ee{\end{equation}}
\def\la{\label}
\def\tr{{\rm tr}}
\def\d{{\mathrm{d}}}
\def\a{\alpha}
\def\b{\beta}
\def\R{{\mathbb R}}
\def\C{{\mathbb C}}
\def\Hcal{{\mathcal H}}
\def\at{\widetilde{\alpha}}
\def\bt{\widetilde{\beta}}
\def\ah{\widehat{\alpha}}
\def\bh{\widehat{\beta}}
\def\bea#1\eea{\begin{align}#1\end{align}}
\def\Ht{\widetilde{\Hcal}}
\def\Hh{\widehat{\Hcal}}
\def\Ch{\widehat{\Ccal}}
\def\Ct{\widetilde{\Ccal}}
\begin{document}

\vspace{0.2cm}
\begin{center}
\begin{huge}
{Log-canonical coordinates on $SL(2, \C)$ character varieties of compact Riemann surfaces}

\end{huge}
\bigskip
M. Bertola$^{\dagger}$\footnote{marco.bertola@concordia.ca},  
D. Korotkin$^{\dagger}$ \footnote{dmitry.korotkin@concordia.ca},
J.Pillet$^{\dagger}$ \footnote{jordi.pillet@concordia.ca}
\\
\bigskip
\begin{small}
$^{\dagger}$ {\it   Department of Mathematics and
Statistics, Concordia University\\ 1455 de Maisonneuve W., Montr\'eal, Qu\'ebec,
Canada H3G 1M8}
\end{small}
\vspace{0.5cm}
\end{center}

\vskip1.0cm

{\bf Abstract.}
We construct new sets of log-canonical coordinates on the $SL(2, \mathbb{C})$ character variety of compact Riemann surfaces. These  are labelled by  families of $1\leq m\leq 3g-3$ non-intersecting simple loops on
the Riemann surface and are
obtained by combining the complexified shear-type with length/twist-type coordinates.
In the case $m=3g-3$ the loops define a trinion decomposition of the Riemann surface, and  our coordinates are closely related to the (complexified)  Fenchel-Nielsen ones.

\vskip1.0cm

\tableofcontents 

\section{Introduction}
The $SL(2,\C)$ character variety, $\V_g$, of compact Riemann surfaces of genus $g$ is equipped with a  natural complex symplectic form, inverse to the Goldman Poisson bracket \cite{goldman1984symplectic}.  Local Darboux coordinates for this symplectic structure can be  obtained by analytic continuation of the Fenchel-Nielsen coordinates on the Teichm\"uller space $\Pcal_g$ \cite{wolpert1982, wolpert1985weil}  to the complex domain \cite{Platis}.

On the character variety, $\V_{g,n}$,  of a Riemann surface with  $n\geq 1$ boundary components the (complexified) Thurston's shear coordinates \cite{thurston} are log-canonical for the Goldman form on the symplectic leaves 
labelled by the values of complexified lengths of the boundary components
(we remind that a set of  coordinates is called log-canonical if the coefficients of the symplectic 2-form  are constant when written in terms of their logarithms).

In the case without boundary components,  $\V_g = \V_{g,0}$, however, there is so far no definition of complexified shear coordinates although important steps in this direction have been made in \cite{bonahon2018goldman}.  In the case of $\V_{2}$, a set of log-canonical coordinates alternative to Fenchel-Nielsen were recently constructed in \cite{chekhov2023symplectic}.

The main goal of this paper is  to address the issue in a general way and define log-canonical coordinates on $\V_g$ ($g\geq 2$)  by combining  the (complexified) shear coordinates and the twist-length coordinates. 
More specifically, to any system of simple non-intersecting closed contours $\{\gamma_1,\dots,\gamma_m\}$, $1\leq m \leq 3g-3$,  we associate a new system of (local) log-canonical coordinates on $\V_g$. In the  maximal case  $m=3g-3$, the set of contours defines a ``trinion decomposition'' of the Riemann surface into three-holed spheres (the {\it trinions} or {\it pairs of pants}).  In this case, our coordinates are closely related (but not exactly coinciding) with the complexified Fenchel-Nielsen length-twist coordinates.

Let us give some detail of our construction in the    case of one separating    loop $\gamma=\gamma_1$. Then    $\Ccal\setminus \gamma$ is the union of  two  Riemann surfaces, $\Ch$ and $\Ct$ of genera $\gt$ and $\gh$, respectively, with one boundary component each. 
Consider a homomorphism $ \rho:\pi_1(\Ccal, p_0) \to SL(2,\C)  $.
  Let $-\l_\g$ and $-\l_\g^{-1}$ be the eigenvalues of the monodromy matrix $\rho(\g)=M_\g$, and denote $\len_\g=\ln\l_\g$, with some choice of branch for the complex logarithm. Let us choose a point $\t v\in \Ct $ in a neighbourhood of $\gamma$ on the $\Ct$ side, and similarly another point $\h v\in \Ch$ that is also close to $\gamma$ but lying on the $\Ch$ side.  Consider the surface without boundary obtained by pasting back a disk on the boundary of $\Ct$ (we will refer to this surface in the sequel as the {\it capped} surface), and then consider a   triangulation  $\t{\Sigma}$ of the resulting surface  with only one vertex at $\t{v}$ such that the pasted disk is contained entirely inside of one of the triangles.
Let $\h{\Sigma}$ be a similarly constructed triangulation of $ \h \Ccal $ with a single vertex at $\h v$.

The triangulations $\t\Sigma$ and $\h \Sigma$ have  $\t n=6\t{g}-3$ and $\h n=6\h{g}-3$ edges,  respectively, and  the valences of the vertices $\t{v}$ and $\h{v}$ are equal to $2\t n$ and $2\h n$, since every edge of $\t\Sigma$ (resp. $\h\Sigma$) comes to $\t v$ (resp. $\h v$) twice.
To each edge, $\et_i$,  of the  graph $\t\Sigma$ we assign a  coordinate $\t \zeta_{e_i} \in \C$ and  $\t z_{e_i}= {\rm e}^{\t \zeta_{e_i}}$, which end up to be interpreted as complex coordinates of shear type  parametrizing the character variety $\V_{\gt,1}$ of $\Ct$ (by abuse of language we continue calling $\t \zeta_{e_i}$  shear or shear type coordinates  although they really coincide with logarithms of Thurston's shear coordinates only when the length of the boundary curve is zero). As  shown in \cite{bertola2023extended}, see also Sec.\ref{CtCh}, the complex ``length''  $\ell_\gamma$ of the separating loop $\gamma$ is related to these coordinates by 
\be
2\sum_{i=1}^{\nt}  \t \zeta_{e_i}=\len_\gamma \mod 2\pi i
\la{zl}
\;.
\ee

Similarly,  coordinates $\h \zeta_{e_j} $'s and $\h z_{e_j}$'s are assigned to the edges $\eh_j$ of $\h\Sigma$ and they satisfy the relation
\be
2\sum_{j=1}^{\nh} \h \zeta_{e_j}=\len_\gamma \mod 2\pi i \;,
\la{yl}
\ee 
where it is important to point out that the parameter $\ell_\gamma$ in (\ref{zl}), (\ref{yl}) is the same.
There is an extra coordinate, $\beta_\gamma$, in the construction,  playing the role of a ``twist-parameter'' of the gluing around the contour $\gamma$.

The total number of independent variables, keeping into account the two linear constraints \eqref{zl}, \eqref{yl} 
is  $\nt+\nh=6g-6= {\rm dim} \V_g$.

As we show in  Th.\ref{maintheo} the symplectic form $\Omega$  on $\V_g^{(\gamma)}\subset \V_g$\footnote{The superscript denotes the subset of representations where $M_\gamma$ is diagonalizable.}  can be written as follows:
\be
  \Omega=\Omega_{0}+ {\Omega_{\t v}}+{\Omega_{\h v} }\;,
  \la{WPint1}
  \ee
where 
$$
    \Omega_0=\mathrm{d}\beta_\gamma \wedge \mathrm{d}\len_\gamma  \;,
$$
$$
{\Omega_{\t v}}= \sum_{\substack{i,j=1 \\ i<j} }^{12 \t{g}-6} 
 \mathrm{d}\zeta_{\t h_i} \wedge \mathrm{d} \zeta_{\t h_j}\;,\hskip0.7cm
{\Omega_{\h v} }=\sum_{\substack{k,l=1 \\ k<l} }^{12 \h g-6}  \mathrm{d}\zeta_{\h h_k} \wedge \mathrm{d}\zeta_{\h h_l}\;.
$$
The form is restricted to the (log)-linear constraints (\ref{zl}), (\ref{yl}). Here $\t h_j$ denote a local counterclockwise enumeration of the $12\t g-6$ edges  incident at $\t v$, since each of the $6\t g-3$ edges  $e_j$ is incident to $\t v$ at both ends. Similarly for $\h h_j$.  

This construction extends to an arbitrary system of simple non-intersecting contours ${\gamma_1,\dots,\gamma_m}$, $1 \leq m \leq 3g-3$, which split $\Ccal$ into $n$ subsurfaces $\Ccal^{(i)}$ with $k^{(i)}$ boundary components (notice that $\sum_{i=1}^n k^{(i)} = 2m$). In this case we have $m$ pairs of log-canonical coordinates $\{\beta_{\g_j},\len_{\g_j}\}_{j=1}^m$, one  for each loop $\gamma_j$.  
Denote by $\Sigma^{(i)}$ a triangulation of $\Ccal^{(i)}$ with $k^{(i)}$ vertices placed in neighbourhoods of the boundary components of $\Ccal^{(i)}$.
The number of edges of  $\Sigma^{(i)}$ equals to  $6g^{(i)}-6+3k^{(i)}$; to each edge   we assign a  coordinate $\zeta_e$. In terms of these coordinates one can write the Goldman symplectic form on $ \V_g^{(\boldsymbol \gamma)}$ as follows (Th.\ref{MC}):
\be
\Omega=\sum_{i=1}^n \Omega^{(i)} + \sum_{j=1}^m \d\beta_{\g_j} \wedge \d\len_{\g_j} \;,
\la{O121}\ee
where
\begin{equation}
\Omega^{(i)}=\sum_{v\in V(\Sigma^{(i)})}\sum_{\substack{e, \t{e} \perp v \\ e <\t{e}} } \d\zeta_e \wedge \d \zeta_{\t{e}}\;;
\la{Oi1}
\end{equation}
 in the latter expression the form must be restricted to the subspace of the constraints that replace \eqref{zl}, \eqref{yl} in this case (a pair of such relations for each loop $\gamma_j$). 
 
When $ m=3g-3$, the surface $ \Ccal$ decomposes into $2g-2$ trinions ${\mathcal T}^{(i)}$, $i=1,\dots,2g-2$. A  trinion  contains 3 edge variables and 3  boundary lengths $ \len^{(i)}_1$, 
$ \len^{(i)}_2$, $ \len^{(i)}_3$,
thus all edge coordinates $\zeta_e$ can be expressed in terms of the boundary lengths  of ${\mathcal T}^{(i)}$
$$
2\zeta_1=-\len_1+\len_2+\len_3\;,\;\;\;
2\zeta_2=\len_1-\len_2+\len_3\;,\;\;\;
2\zeta_3=\len_1+\len_2-\len_3\;,\;\;\;
$$
which are valid for each trinion, thus the upper index is omitted.
Using the ribbon trinion graph $\mathfrak T$ (Def. \ref{deftringraph}), whose vertices correspond to trinions with a choice of cyclic order of the three outlets, and edges to the glueings of their boundary components, one may rewrite $\Omega$ as follows (see Theorem \ref{TheoTD}). 
  \begin{equation}\label{OMT}
    \begin{split}
       \Omega=   \sum_{\mathcal T^{(j)}\in V(\frak T) } (\d \len^{(j)}_1\wedge\d \len^{(j)} _2 + \d \len^{(j)} _2\wedge\d  \len^{(j)}_3+ \d \len^{(j)}_3\wedge \d \len^{(j)}_1)
        + \sum_{\frak e \in E(\frak T)} \mathrm{d} \beta_{\frak e}  \wedge \mathrm{d} \len_{\frak e} \;,
    \end{split}   
    \end{equation}
where $\beta_{\mathfrak e}$ are toric variables $\beta_{\mathfrak e}$ corresponding to each edge
  ${\mathfrak e}$ of the trinion graph;
$\len^{(j)}_a$, $a=1,2,3$ are the "complex lengths" of the boundary components of trinion $\mathcal T^{(j)}$, such that their ordering agrees with the ordering of the edges of the triangulation of the trinion as shown in Fig \ref{pants}. For each edge $\frak e=[j,k]$ that connects the boundary $a$ of  $\mathcal T^{(j)}$ and the boundary $b$ of $\mathcal T^{(k)}$  ($a, b\in \{1,2,3\}$) we impose the constraint $\d \ell_{a}^{(j)} =  \d \ell_{\mathfrak e}$ and $\d \ell_b^{(k)} = \d \ell_{\mathfrak e}$.

\begin{figure}[h!]
\centering
    \includegraphics[scale=0.8]{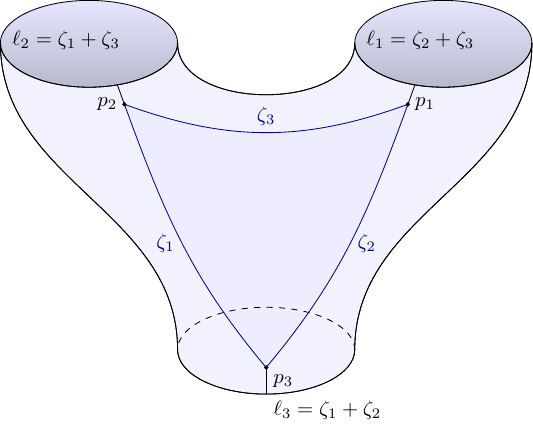}
    \caption{Trinion with triangulation and relation between edge variables and complex lengths of the boundaries }
    \label{pants}
\end{figure}

Explicit computations of the Goldman symplectic form $ \Omega$ for $V_g$ with $g=2$ are given in  appendix \ref{examplesapp}.

{\bf Acknowledgements.}  We thank Misha Shapiro for enlightening discussions which  were the starting point of  this work.
The work of M.B. was supported in part by the Natural Sciences and Engineering Research Council of Canada (NSERC) grant RGPIN-2016-06660.
The work of D.K. was supported in part by the NSERC grant
RGPIN-2020-06816.

\section{Background}
\subsection{Character variety of a compact Riemann surface}
Let  $\Ccal$ be an oriented compact Riemann surface of genus $g$. Standard generators
$\{\alpha_k,\beta_k\}_{k=1}^g$ of the fundamental group
  $\pi_1(\Ccal)$ can be chosen so that they satisfy the fundamental relation in $\pi_1$:
\be
   \prod_{i=1}^g \left[ \alpha_i, \beta_i \right]= id \;,
   \la{relfun}
   \ee
where $$\left[ \alpha_i, \beta_i \right] = \alpha_i \beta_i^{-1} \alpha_i^{-1} \beta_i \;.$$
A point of the $SL(2, \C)$ character variety  $\V_g$ is  represented by the equivalence class under conjugation of the  set of monodromy matrices $\{M_{\alpha_1}, M_{\beta_1}..., M_{\alpha_g}, M_{\beta_g} \} \subset SL(2, \C)$ satisfying the represented version of \eqref{relfun}:
    \begin{equation}\label{repcl}
        \prod_{i=1}^g \left[ M_{\alpha_i}, M_{\beta_i} \right]=\mathbb{I}_2\;.
    \end{equation}
The ring of functions on  $\V_g$ is generated by the traces of all possible words in the alphabet of the generators $M_{\alpha_1}, \dots, M_{\beta_g}$.

The Goldman  Poisson bracket on $\V_{g}$ (the complexification of the Weil-Petersson bracket on $\Mcal_g$) can be written as follows in terms of trace functions
\begin{equation}
\label{GoldBrack}
\{ \tr M_{\gamma}, \tr M_{\Tilde{\gamma}} \} = \sum_{p \in \gamma \cap \Tilde{\gamma} } \nu(p) \left(\mathrm{tr}M_{\gamma \circ_p \Tilde{\gamma} }- \frac{1}{2} \tr M_{\gamma} \,\tr M_{\Tilde{\gamma}} \right) \;,
\end{equation}
where $ \gamma, \Tilde{\gamma} \in \pi_1(\Ccal)  $ and $\nu(p) $ is the contribution of the point $p$ to the intersection index of $\gamma$ and $\Tilde{\gamma}$.

The Goldman bracket on the character variety of {\it closed} surfaces, $\V_g$, is non-degenerate while for surfaces with boundary it has Casimir subalgebra  generated  by the traces of powers of loops retractable to the boundary components. The symplectic form inverting  the  Goldman's bracket on an arbitrary  $SL(N, \mathbb{C})$ character variety of a compact Riemann surface was found in \cite{alekseev1995symplectic}, but  corresponding log-canonical coordinates for them are not known.

Below, we show how to construct systems of Darboux coordinates on $\V_g$ which can be naturally extended to higher $SL(N, \mathbb{C})$ character varieties using so-called Fock-Goncharov coordinates.

\subsection{Extended character varieties for Riemann surfaces with boundary}

\label{SecExtSymp}
Let now $\Ccal$ be a closed and oriented Riemann surface of genus $g$ with $n \geq 1$  marked points $t_1,\dots,t_n$. Then the standard 
generators $\{\gamma_j\}_{j=1}^n$,
$\{\alpha_k,\beta_k\}_{k=1}^g$ of the fundamental group
  $\pi_1(\Ccal \setminus \{t_1,\dots, t_n\})$ can be chosen such that:
$$
 \quad \gamma_1...\gamma_n \prod_{i=1}^g \left[ \alpha_i, \beta_i \right]= id \;.
 $$

Consider now a representation  of $\pi_1(\Ccal \setminus \{t_1,\dots, t_n\})$ in $SL(2, \C) $.
Then monodromy matrices corresponding to the standard generators of the fundamental group  satisfy the same relation:
\be
M_{\gamma_1} ... M_{\gamma_n} \prod_{i=1}^g  \left[ M_{\alpha_i}, M_{\beta_i} \right] =\mathbb{I}_2 \;. 
\la{relMn}
\ee
For any $\gamma\in \pi_1(\Ccal \setminus \{t_1,\dots, t_n\})$ we denote the eigenvalues of  $M_{\gamma}$  by $ -\lambda_{\g}$ and $ -1/\lambda_{\g}$ and set  $\ell_{\gamma_j} := \ln \lambda_{\gamma_j}$ for some choice of determination.

Define the open subvariety $\m{\V}^0_{g,n} $ of the character variety $\V_{g,n}$ as the space of monodromy representations modulo simultaneous conjugation by $SL(2,\mathbb{C})$ such that each monodromy $M_{\g_k}$ has distinct eigenvalues. The ring of functions  on $\V^0_{g,n}$ is generated by the trace functions, i.e. $\tr\, M_\gamma$, $\gamma\in \pi_1(\Ccal \setminus \{t_1,\dots, t_n\})$.
The Goldman bracket    (\ref{GoldBrack}) on $\V_{g,n}^0$  is degenerate, with the Casimir elements being the eigenvalues $\lambda_{\gamma_j}$  of $M_{\gamma_j}$.

The variety $\V^0_{g,n}$ admits natural extension, ${\Hcal}_{g,n}$, which possesses a  {\it nondegenerate} Poisson structures. 
The idea of such an extension is to introduce canonical partners to the Casimirs $\len_{\g_j}= \ln \l_{\g_j},  \ j=1,\dots, n$ (we do not fix the order of the eigenvalues and eigenvectors of $M_{\gamma_k}$). 

Define the extended space $ \mathcal{H}_{g,n}$ as follows
\begin{equation}\label{ext}
{\Hcal}_{g,n}=\left \{ \{ M_{\alpha_j}, M_{\beta_j} \}_{j=1}^g, \{ C_k, \Lambda_k \}_{k=1}^{n} : \hskip0.4cm C_1 \Lambda_1 C_1^{-1}\dots C_N \Lambda_N C_N^{-1} \prod_{i=1}^g \left[ M_{\alpha_i}, M_{\beta_i} \right]=\mathbb{I}_2 \right \}\Big / \sim \;,
\end{equation} 
where $\Lambda_k $ are diagonal matrices in $SL(2,\C)$.  
The equivalence relations is  $M_{\gamma_k} \to G M_{\gamma_k} G^{-1}$, $C_k \to G C_k $ for a fixed $G \in SL(2, \C) $ (while $\Lambda_k$ remain invariant).

The space ${\Hcal}_{g,n}$, introduced in \cite{jeffrey1994extended}, is a toric fibration over $\V_{g,n}^{0}$.
The toric action consists in multiplication of the eigenvector matrices $C_k$ from  the right by an invertible diagonal $SL(2)$ matrix ${\rm diag} (b_{\gamma_k}, b^{-1}_{\g_k})$, where the  $b_{\gamma_k}$'s are called {\it toric variables}.

The  toric variables define the change of  normalization of the eigenvectors of $M_{\g_k}$ encoded in the matrices $C_k$: they can be used as coordinates as long as some reference normalization is used. This is done, in specific cases, in \cite{bertola2023extended}.

 The extended character variety ${\Hcal}_{g,n}$  possesses a natural symplectic form \cite{boalch2018wild,bertola2023extended}.
 In \cite{bertola2023extended} this symplectic form was represented in a log-canonical form for an arbitrary $SL(N, \mathbb{C})$ group by combining Fock-Goncharov coordinates with the toric variables. The toric coordinates $ \beta_{\g_k}= \ln b_{\g_k}$ are then the canonical partners of the $\len_{\g_k}$'s. 
 
\subsection{Graphs on surfaces and canonical symplectic form}

We summarize here the main ideas of \cite{bertola2023extended}.
Let $\Gcal$ be a finite 
 graph embedded into a Riemann surface $\Ccal$, with $V(\Gcal)$ and $E(\Gcal)$ denoting the sets of its (finitely many) vertices and oriented edges respectively. 
We will use the following notation: $e=[v_1, v_2]$ denotes the oriented edge connecting $v_1$ to $v_2$ and $-e=[v_2, v_1]$ denotes the oriented edge connecting $v_2$ to $v_1$. 
If an  edge $e$ is incident to a vertex $v$ we write  $e \perp v$.  We define the valence, $ n_v$, of a given vertex $ v$ to be the total number of edges incident to $v$. Note that an edge may be incident to the same edge at both ends, in which case it contributes $2$ to the valence.

\begin{dref}\rm
\label{can graph}
    A pair $(\Gcal, J)$ consisting of an embedded graph $\Gamma$ on $\Ccal$, up to isotopy, and a map $J: E(\Gcal) \rightarrow SL(2, \C)$  is called {\it admissible}\footnote{In \cite{bertola2023extended} the term was "canonical" which we prefer to change here not to overload the term.}  if it satisfies the following conditions: 

\begin{enumerate}
 \item
 For any  $e\in E(\Gcal)$ we have  $J(-e)=J(e)^{-1}$, where $-e$ is the edge in the reversed orientation. We call  $ J(e)$  the {\it jump matrix} associated to the edge $ e$.
    \item
    For each vertex $v \in V(\Gcal)$ of valence $n_v \geq 2$ with incident {\it outgoing} edges $\{ e_1, e_2,..., e_{n_v}\}$ (enumerated counterclockwise starting from an arbitrary chosen edge) the {\it local monodromy} around $v$ is trivial, i.e. 
     \be
     J(e_1) J(e_2)...J(e_{n_v})=\mathbb{I}_2\;.
     \la{JJJ}\ee
    \end{enumerate}
\end{dref}

It is to be noted that the same edge $e\in E(\Gamma)$ may be incident to a vertex at both ends: of course,  in this case, the same matrix appears in \eqref{JJJ} twice,  once with power $1$ and once with power $-1$ somewhere in the cyclic product. 

To each admissible pair $(\Gcal, J) $ we associate the \textit{canonical  two-form} $ \Omega(\Gamma) $ as in \cite{bertola2023extended}:   
\begin{equation} \label{OmegaG}
        \Omega ({\Gcal})= \sum_{v \in V(\Gcal)} \Omega_v, \qquad \ \ \ 
        \Omega_v:= \frac{1}{2} \,\tr \sum_{l=1}^{n_v-1} \left(J_{[1...l]}^{(v)}\right)^{-1} \d J_{[1...l]}^{(v)} \wedge \left(J_l^{(v)}\right)^{-1} \d J_l^{(v)} \;,
\end{equation}  
where $J_l^{(v)}=J(e_l)$, $ e_l \perp v$, are the matrices defined by the map in Definition \ref{can graph} corresponding to the edges $e_1, e_2,..., e_{n_v}$ incident to $v$, oriented away from $v$  and 
$$
J_{[1...l]}^{(v)} = J_1^{(v)} J_2^{(v)}...J_l^{(v)} \;.
$$
The form $\Omega$ is independent of the choice of cyclic order at the vertices (as long as the no-monodromy condition \eqref{JJJ} holds at all vertices): moreover it is a {\it closed}, 
\be
\d \Omega =0
\la{dOm}\ee
thanks to the following identity   holding for any set of  matrices $J_1, \dots, J_k$ \cite{alekseev1995symplectic}:
$$
\d\left( \tr \sum_{l=1}^{k-1} \left (J_{[1...l]} \right)^{-1} \d J_{[1...l]} \wedge \left(J_l\right)^{-1} \d J_l \right)= \tr \Big(
\mu^{-1} \d \mu \wedge\mu^{-1} \d \mu \wedge\mu^{-1} \d \mu\Big)\;,
$$
where
$$
\mu := J_1\cdots J_k\;.
$$
Thus, due to \eqref{JJJ}, the term corresponding to each vertex in the sum (\ref{OmegaG}) is a closed two-form, implying (\ref{dOm}).

The form  $ \Omega({\Gcal}) $ is invariant under the  following {\it admissible moves} \cite{bertola2023extended}:
\begin{enumerate}
    \item {\it merging} of two vertices $ v_1$ and $ v_2$ of valence $ n_{v_1}+1$ and $ n_{v_2}+1$ respectively, connected by a common edge $ e_0$, to  a single vertex $ v$ of valence $ n_{v_1}+n_{v_2}$.
    \item {\it zipping} of two edges $ e_1$ and $ e_2$ connecting the same two vertices $ v_1$ and $ v_2$, to a single edge $ e $ with jump matrix $ J(e) =  J(e_1)J(e_2)$. \\
\end{enumerate}
\paragraph{From admissible pairs to representations of $\pi_1$.}
The data of an admissible pair $(\Gamma, J)$ defines also a representation of the fundamental group of $\Ccal\setminus V_1(\Gamma)$ (where $V_1(\Gamma)$ is the collection of univalent vertices of $\Gamma$) as follows; 
fix a basepoint $z_0\in \Ccal \setminus \Gamma$. For each homotopy class   $[\gamma]\in \pi_1\big(\Ccal\setminus V_1(\Gamma), z_0\big)$ choose a representative that intersects $\Gamma$ at the edges and transversally. 
Then define 
\be
\label{defrho}
\rho(\gamma) = \prod_{p\in \gamma \cap \Gamma} J(e_p)^{\sharp(e_p,\gamma)}\;,
\ee
where $e_p$ is the edge intersected by $\gamma$ at $p\in\Gamma\cap \gamma$ and $\sharp(e_p,\gamma)\in \{1,-1\}$ is the orientation of the intersection ($+1$ if the tangent to $e_p$ and that of $\gamma$ at $p$ form a positively oriented  frame). 
Thanks to \eqref{JJJ} this defines a representation of the fundamental group because it does not matter if we go around a multi-valent vertex on one side or the other, and, consequently, its adjoint equivalence class defines a point of the character variety.  
\subsection{Canonical dissection graph and symplectic form}

Let $ \Ccal$ be a compact  Riemann surface of genus $g$. Choose a base point $v_0$ and a set of standard generators $\{\a_j,\b_j\}_{j=1}^g$ 
of $\pi_1(\Ccal,v_0)$ satisfying (\ref{relfun}). Denote by $M_{\alpha_i}$, $M_{\beta_i}$ the corresponding monodromies
 which satisfy (\ref{relMn}) for $n=0$:
\be
\prod_{j=1}^g [M_{\a_j} , M_{\b_j}]=\mathbb{I}_2 \;.
\la{MMM1}
\ee

Let us choose some oriented closed contours on $\Ccal$ starting at $v_0$ representing the elements $\a_j$, $\b_j$; with a slight abuse of notations we denote these loops by the 
same letters. Consider the one-valent   graph $\Gcal_0$ (which we call the \textit{canonical dissection graph}, see Figure \ref{fig:SG} (a)) with vertex at $v_0$ and $2g$ oriented edges 
$\{{\a_j},{\b_j}\}_{j=1}^g$; the valence of $v_0$ equals $4g$. Define the jump matrices on the edges of $\Gcal_0$ as follows:
\be
J({\alpha_j})=M_{\beta_j} \;, \hskip0.7cm J({\beta_j})=M_{\alpha_j} \;.
\la{JJMM}
\ee
Notice that the relation (\ref{JJJ}) in the general definition of jump matrices on a given graph is written under assumption  that all the edges $e_j$ are incoming to 
the vertex. In our case, since every edge meets the vertex twice, while defining the edge matrices $J(e_j)$ in the  form (\ref{OmegaG}) we need to reverse the orientation 
of $2g$ out of the  $4g$ half-edges incident to $v_0$ as shown  in Fig \ref{fig:SG} (b).

\begin{figure}[ht!]
\centering
\includegraphics{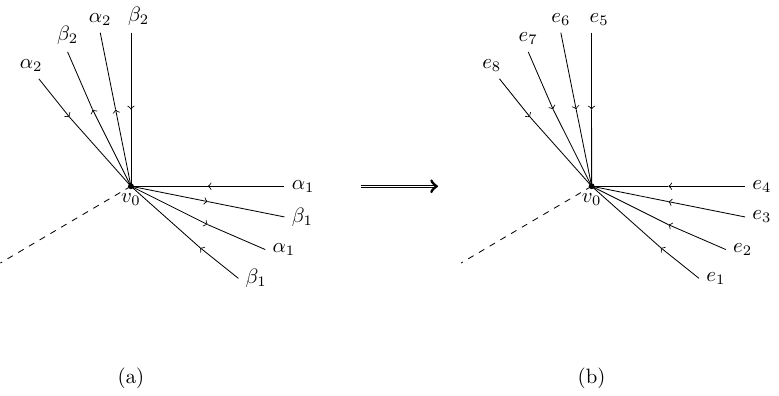}
\caption{Orientation of the edges of the canonical dissection graph $\Gcal_0$ for $ g=2$.}
\label{fig:SG}
\end{figure}

Then we get: 
$$
J(e_1)= M_{\alpha_1}\;, \hskip0.5cm
J(e_2)=M_{\beta_1}^{-1}\;, \hskip0.5cm
J(e_3)= M_{\alpha_1}^{-1}\;, \hskip0.5cm
J(e_4)=M_{\beta_1}\;, \dots \;.
$$

Under these conventions we obtain
        $$ J(e_1)J(e_2)...J(e_{4g})= \prod_{i=1}^{g}  M_{\alpha_i}  M_{\beta_i}^{-1} M_{\alpha_i}^{-1}  M_{\beta_i} =\Id\;. $$

Therefore, the canonical dissection  graph $\Gcal_0$ with our choice of jump matrices satisfies the condition (\ref{JJJ}). The graph $\Gamma_0$ has   one vertex of valence $4g$  and $2g$ edges.

Applying the formula (\ref{OmegaG}) to the pair $(\Gcal_0, J)$ we get the symplectic form $ \Omega(\Gamma_0)$ which can be written as follows:

\begin{equation}\label{OmegaSt}
    \Omega(\Gamma_0)= \frac{1}{2} \sum_{j=1}^{2g-1} \mathrm{tr}\left(K_{[1...j]}^{-1} dK_{[1...j]} \wedge K_j^{-1} dK_j \right) \;,
\end{equation}
where $K_j$ is the $j$th matrix in the product (\ref{MMM1}); $ K_{[1...j]} $ denotes the ordered product of matrices entering in the same product (\ref{MMM1}) up to $j$; for instance 
$$K_{[1]}=M_{\alpha_1}\;,\hskip0.5cm    K_{[12]}=M_{\alpha_1} M_{\beta_1}^{-1}\;,\hskip0.5cm
 K_{[123]}= M_{\alpha_1} M_{\beta_1}^{-1} M_{\alpha_1}^{-1} \;, \dots \;.$$

\begin{prop}
    The form $ \Omega(\Gamma_0)$ given by (\ref{OmegaSt}) coincides with the Goldman  symplectic form $\Omega$ on $\V_g$.
\end{prop}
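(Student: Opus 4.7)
The strategy is to identify $\Omega(\Gamma_0)$ with the symplectic form of Alekseev--Malkin--Meinrenken \cite{alekseev1995symplectic} on the moduli space of flat $SL(2,\C)$ connections on $\Ccal$, which by the main result of that paper inverts the Goldman Poisson bracket and hence agrees with $\Omega$ on $\V_g$. The key point is that the one-vertex graph $\Gcal_0$ together with the jumps \eqref{JJMM} gives precisely the standard presentation of $\V_g$ as a symplectic quotient on which the Alekseev--Malkin--Meinrenken (AMM) formula is written.

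First I would make explicit the dictionary between the admissible pair $(\Gcal_0,J)$ and the representation variety. Enumerating the $4g$ outgoing half-edges at $v_0$ as in Fig.\ref{fig:SG}, the associated jump matrices are $K_1=M_{\a_1}$, $K_2=M_{\b_1}^{-1}$, $K_3=M_{\a_1}^{-1}$, $K_4=M_{\b_1}$, $\dots$, so that the admissibility relation \eqref{JJJ} at $v_0$ reads
\[
K_1K_2\cdots K_{4g}=\prod_{i=1}^g M_{\a_i}M_{\b_i}^{-1}M_{\a_i}^{-1}M_{\b_i}=\prod_{i=1}^g[M_{\a_i},M_{\b_i}]=\Id,
\]
which is exactly the constraint \eqref{MMM1} defining $\V_g$. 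Moreover, the prescription \eqref{defrho} assigns to each homotopy class in $\pi_1(\Ccal,v_0)$ the corresponding product of $M_{\a_i}^{\pm1},M_{\b_i}^{\pm1}$, so the representation induced by $(\Gcal_0,J)$ is the tautological one. Consequently the smooth locus of $\{(K_1,\ldots,K_{4g}):K_1\cdots K_{4g}=\Id\}/SL(2,\C)$ (modulo simultaneous conjugation) is precisely $\V_g$.

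Next I would compare \eqref{OmegaSt} to the AMM form. The AMM construction, applied to the fusion double of $g$ copies of $SL(2,\C)$ reduced by the group-valued moment map $\mu=\prod_i[M_{\a_i},M_{\b_i}]$ at the identity, yields a symplectic form that, after dualizing to variables $K_j$ with $K_1\cdots K_{4g}=\Id$, equals
\[
\frac{1}{2}\sum_{j=1}^{4g-1}\tr\!\left(K_{[1\ldots j]}^{-1}\d K_{[1\ldots j]}\wedge K_j^{-1}\d K_j\right).
\]
This is literally the right-hand side of \eqref{OmegaSt}. The theorem of \cite{alekseev1995symplectic} then asserts that the reduction of this form to the symplectic quotient inverts the Goldman bracket \eqref{GoldBrack}, i.e.\ equals the Goldman symplectic form on $\V_g$.

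The main obstacle, and essentially the only substantive check, is the alignment of orientation and cyclic-ordering conventions between the admissible-pair setup (Def.\ref{can graph}) and the AMM normalization: half of the half-edges at $v_0$ must be reversed in order for \eqref{JJJ} to hold, which inverts the corresponding matrices and could in principle introduce sign or ordering discrepancies in the Maurer--Cartan terms. A direct diagrammatic verification, based on Fig.\ref{fig:SG} and the cyclic invariance of the summand in \eqref{OmegaG} established in \cite{bertola2023extended} (which shows that the form does not depend on the starting edge in the cyclic enumeration), rules this out. As an independent sanity check one may, rather than invoking \cite{alekseev1995symplectic}, compute $\{\tr M_\g,\tr M_{\t\g}\}$ from \eqref{OmegaSt} for homotopy classes $\g,\t\g$ written as words in the $M_{\a_i},M_{\b_i}$ and match the Goldman formula \eqref{GoldBrack} term by term via the intersection points of $\g$ and $\t\g$; since trace functions generate the ring of functions on $\V_g$, this suffices.
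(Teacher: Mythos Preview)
Your overall strategy coincides with the paper's: both reduce the claim to the Alekseev--Malkin computation of the symplectic form inverting the Goldman bracket. The execution differs. The paper does not assert that the Alekseev--Malkin formula is ``literally'' \eqref{OmegaSt}; instead it recalls from \cite{bertola2023extended} that the Alekseev--Malkin expression can be recast as the canonical form $\Omega(G_{AM})$ associated to a specific (different) graph $G_{AM}$, and then shows that $G_{AM}$ can be transformed into $\Gamma_0$ by a sequence of admissible moves (mergings and zippings), each of which preserves the canonical form. Thus $\Omega(\Gamma_0)=\Omega(G_{AM})=\Omega$. Your step ``after dualizing to variables $K_j$ \dots\ equals [the formula in \eqref{OmegaSt}]'' is precisely the content of that graph-move argument, and is not as immediate as you suggest: the Alekseev--Malkin form is written in the $(M_{\alpha_i},M_{\beta_i})$ variables with a different grouping of Maurer--Cartan terms, and the identification with the $K_j$ expansion requires exactly the kind of rewriting that the admissible-move machinery encodes. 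Your alternative route (matching \eqref{GoldBrack} on trace functions directly from \eqref{OmegaSt}) is genuinely different and in principle self-contained, but you have only sketched it; carrying it out is nontrivial since one must invert the form, not merely write it down. Also, a minor point: the paper invokes Alekseev--Malkin \cite{alekseev1995symplectic}, not the later quasi-Hamiltonian framework of Alekseev--Malkin--Meinrenken; conflating the two is harmless here but worth keeping straight.
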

The proof is contained in  \cite{bertola2023extended} and we recall here the main steps:
   the symplectic form $\Omega$ inverting the Goldman bracket on the $SL(N, \mathbb{C})$ character variety of $\pi_1(\Ccal)$ was computed by
    Alekseev and Malkin \cite{alekseev1995symplectic}. The expression of   \cite{alekseev1995symplectic} was cast in  the canonical form $\Omega$   \eqref{OmegaG} associated to a particular  graph $G_{AM}$  in \cite{bertola2023extended} (see Fig.6 in loc.cit.). The Alekseev-Malkin graph $G_{AM}$   can be transformed to the canonical dissection graph $\Gcal_0$ by a series of admissible moves preserving the symplectic form.  Therefore,  the form $\Omega(\Gamma_0)$ coincides with the Goldman symplectic form (for an arbitrary $SL(N, \mathbb{C})$ case). 

\section{Parametrization of  $\V_g$ by  gluing along one dissecting contour}
\label{Sec5}
Here, we are going to construct another admissible pair  (in the sense of Def.\ref{can graph})  of graph and jump-matrices  which can be reduced  to $\Gcal_0$ (canonical dissection) by a series of admissible moves. This provides an alternative representation for the form $\Omega(\Gamma_0)$ which will be used in the next section  to construct 
new systems of log-canonical  coordinates on $\V_g$. 
In Sec.\ref{Sec5.1} we consider in detail the case where the dissecting contour disconnects the Riemann surface of genus $g$ into two surfaces of genera $\t g, \h g$, $\t g + \h g = g$, with one boundary each. 
In Sec.\ref{nonsep} we consider the non-separating case where the contour dissects the surface into a surface of genus $g-1$ with two boundaries.

\subsection{Oriented graph on $\Ccal$ by amalgamation of  graphs on two Riemann surfaces of lower genera }
\label{Sec5.1}
Consider two topological Riemann surfaces, $\Ct$ and $\Ch$, of genera $ \t{g}$ and $\h{g}$, respectively (such that $g=\t{g}+\h{g}$) with one boundary component each. We denote by $ \t{\gamma}$ (resp. $\h{\g}$) a contour in the neighbourhood of the boundary and retractable to it.

Pick a point $ \t{v} $ on  $\Ct$ near $\t{\g}$ and a point $ \h{v} $ on  $\Ch$  lying near $\h{\g}$.
The fundamental group  $\pi_1( \Ct,\t{v}) $ is freely  generated by the elements $ \{ \t{\alpha}_i, \t{\beta}_i \}_{i=1}^{\t{g}}$
and we define the monodromy around the boundary of $\Ct$ by
\be
M_{\t{\gamma}}^{-1}=\prod_{i=1}^{\t{g}} \left[ M_{\t{\alpha}_i}, M_{\t{\beta}_i} \right]\;.
\la{Mtdef}
\ee
We assume that $M_{\t{\gamma}}$ is diagonalizable, $M_{\t{\gamma}}=\t{C} \t{\Lambda} \t{C}^{-1}$ where the matrix of eigenvectors
$\t{C}$ has a freedom of multiplication by an  $SL(2, \C)$ diagonal matrix from the right.

Similarly  $\pi_1( \Ch,\h{v}) $ is freely  generated by the  elements $\{\h{\alpha}_j, \h{\beta}_j\}_{i=1}^{\h{g}}$ 
and we define the monodromy around the boundary of $\Ch$ by
\be
M_{\h{\gamma}}^{-1}=\prod_{i=1}^{\h{g}} \left[ M_{\h{\alpha}_i}, M_{\h{\beta}_i} \right]\;,
\la{Mhdef}\ee
which is also assumed to be diagonalizable,
 $M_{\h{\gamma}}=\h{C} \h{\Lambda} \h{C}^{-1}$.

Define the following  two instances of the extended character variety \eqref{ext}:
\begin{equation}\label{pieces}
  \Ht = \Hcal_{\t g, 1} =  \left \{ \left \{ M_{\t{\alpha}_j}, M_{\t{\beta}_j}  \right \}_{j=1}^{\t{g}},\;\; \{ \t{C}, \t{\Lambda} \} : \hskip0.4cm \t{C} \t{\Lambda} \t{C}^{-1} \prod_{i=1}^{\t{g}} \left[ M_{\t{\alpha}_i}, M_{\t{\beta}_i} \right]=\mathbb{I}_2 \right \}/\sim \;,
\end{equation}
\be
    \Hh =   \Hcal_{\h g, 1} = \left \{ \left \{ M_{\h{\alpha}_j}, M_{\h{\beta}_j}  \right \}_{j=1}^{\h{g}}, \{ \h{C}, \h{\Lambda} \} :  \hskip0.4cm \h{C}\h{ \Lambda} \h{C}^{-1} \prod_{i=1}^{\h{g}} \left[ M_{\h{\alpha}_i}, M_{\h{\beta}_i} \right]=\mathbb{I}_2 \right \}/\sim \;,
\la{pi1}
\ee 
where  $\t{C}, \t{\Lambda}\in SL(2, \C)$ and $\t{\Lambda}=\mathrm{diag}(-\lambda_{\t \g},-\lambda_{\t \g}^{-1})$. The sign here is chosen for  later convenience.
The  monodromies  $\{M_{\t{\alpha}_j}, M_{\t{\beta}_j}  \}$ can be chosen arbitrarily; they 
determine  the matrix $\t{\Lambda}$ up to the ordering of eigenvalues. Similarly for the    hatted    version.

For  a matrix $M_\gamma$ with eigenvalues $-\lambda_\gamma^{\pm 1}$ we will call $\ell_\gamma =  \ln \lambda_\gamma$ the ``oriented complex length'' (called simply ``length'' below). 
\vskip0.3cm

The idea of the following proposition is to construct an element of $\V_g$ starting from  two sets of matrices representing a point of 
$\t{\Hcal}$ and one  of  $\h{\Hcal}$,   satisfying the additional constraint 
\be
 \h{\Lambda} = \t{\Lambda}\;.
 \la{LL}
 \ee
 
 We set $\Lambda = \h \Lambda = \t\Lambda$ in the sequel.
Observe that for a diagonal matrix $\Lambda$, the inverse matrix is in the same adjoint orbit, namely,
\be
\Lambda^{-1} =  \bb^{-1} \Lambda \bb \;,
\ee
where
\be
 \bb =
\begin{bmatrix}
0&1\\-1&0
\end{bmatrix} \;. 
\label{defb}
\ee

Let us glue (topologically) the surfaces $\Ct$ and $\Ch$ along the boundary to get a  topological surface $\Ccal$ of genus $g=\t{g}+\h{g}$. We denote by $ \g$  gluing contour  on $ \Ccal$ which lies between  $ \t{\g}$ with $\h{\g}$, see Fig.\ref{FIG32}.

\begin{figure}[!ht]
\centering
\includegraphics[scale=1.3]{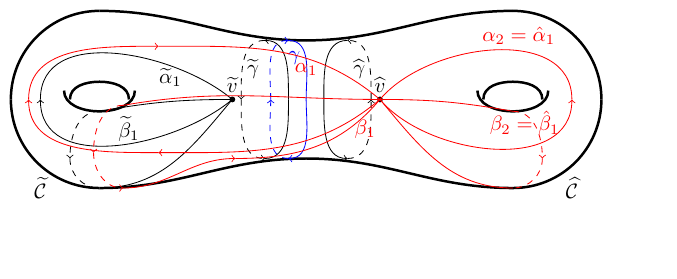}
\caption{Generators of the fundamental groups $\pi_1( \t{\Ccal},\t{v}) $ (black) and  $\pi_2( \h{\Ccal},\h{v}) $ and corresponding generators of $\pi_1(\Ccal,\h{v})$ (red) for $ g=2$.}
\label{FIG32}
\end{figure}

The generators  $\{\alpha_j,\beta_j\}_{j=1}^g$ of the fundamental group $\pi_1(\Ccal,\h{v})$ are naturally inherited from the generators
$\{\t{\alpha}_j,\h{\alpha}_k\}$ and  $\{\t{\beta}_j,\h{\beta}_k\}$ of the fundamental groups of $\Ct$ and $\Ch$ as shown in Fig.\ref{FIG32}.

 We now  construct a surjective    map 
\be
F:  \Ht\times \Hh \Big|_{\h{\Lambda}=\t{\Lambda} = \Lambda } \to \V_g^{(\gamma)}\;,
\la{mapF}
\ee
where $\Ht, \Hh$ are defined in \eqref{pieces}, \eqref{pi1} and  $\V_g^{(\gamma)}\subset \V_g$ consists of 
 representations for which the monodromy $M_\gamma$ is diagonalizable.
\begin{prop}\label{fact}

    Consider a set of matrices representing a point of  $\Ht $ and a set of matrices representing a point of  $ \Hh$ such that  $\h{\Lambda}=\t{\Lambda}$.
    
    The map \eqref{mapF} is defined by taking  the equivalence class,  under common adjoint action, of the  following matrices in  $SL(2, \C)$: 
    $$
           M_{\alpha_i} = \Chh  \bb^{-1} \Ctt^{-1} M_{\t{\alpha_i}} \Ctt  \bb \Chh^{-1} \;,\hskip0.6cm M_{\beta_i} = \Chh 
\bb^{-1} \Ctt^{-1}  M_{\t{\beta_i}} \Ctt \bb \Chh^{-1}, \quad i =1,...,\t{g} \;,
           $$
        \begin{equation} \label{monC}     
            M_{\alpha_{i+\t g}} = M_{\h{\alpha_i}} \;,\hskip0.6cm  M_{\beta_{i+ \t g}}=M_{\h{\beta_i}} \;, \quad i=1,...,\h{g} \;,
    \end{equation}
 which  satisfy the relation \eqref{repcl} (here $\bb$ is in \eqref{defb}).
    
    The map \eqref{mapF}, defined  by \eqref{monC} is surjective.
\end{prop}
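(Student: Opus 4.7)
The plan is to handle the two assertions — that \eqref{monC} satisfies \eqref{repcl} and that $F$ is surjective — separately. In both halves the crucial algebraic identity is that, for any diagonal $\Lambda\in SL(2,\C)$, the matrix $\mathfrak b$ of \eqref{defb} satisfies $\mathfrak b^{-1}\Lambda^{-1}\mathfrak b = \Lambda$; this is immediate from \eqref{defb} and encodes the fact that inversion of a diagonal matrix is an inner automorphism.

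For the relation, abbreviate $P := \hat C\mathfrak b^{-1}\tilde C^{-1}$, so that \eqref{monC} reads $M_{\alpha_i} = PM_{\tilde\alpha_i}P^{-1}$ and $M_{\beta_i} = PM_{\tilde\beta_i}P^{-1}$ for $i\leq \tilde g$, and $M_{\alpha_i}=M_{\hat\alpha_{i-\tilde g}}$, $M_{\beta_i}=M_{\hat\beta_{i-\tilde g}}$ for $i>\tilde g$. Consequently
$$
\prod_{i=1}^{\tilde g}[M_{\alpha_i},M_{\beta_i}] = P\Bigl(\prod_{i=1}^{\tilde g}[M_{\tilde\alpha_i},M_{\tilde\beta_i}]\Bigr)P^{-1} = PM_{\tilde\gamma}^{-1}P^{-1}
$$
by \eqref{Mtdef}. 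Substituting $M_{\tilde\gamma}=\tilde C\Lambda\tilde C^{-1}$, the inner $\tilde C^{\pm 1}$ factors cancel and the key identity collapses the remainder to $\hat C\Lambda\hat C^{-1}$. The product over $i>\tilde g$ equals $M_{\hat\gamma}^{-1}=\hat C\Lambda^{-1}\hat C^{-1}$ by \eqref{Mhdef}, and multiplying the two blocks in the correct order gives $\hat C\Lambda\Lambda^{-1}\hat C^{-1}=\mathbb{I}_2$.

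For surjectivity, take a representative $\{M_{\alpha_i},M_{\beta_i}\}_{i=1}^g$ of a class in $\V_g^{(\gamma)}$ and first declare the hatted monodromies by $M_{\hat\alpha_i}:=M_{\alpha_{\tilde g+i}}$, $M_{\hat\beta_i}:=M_{\beta_{\tilde g+i}}$. Relation \eqref{repcl} together with \eqref{Mhdef} forces $\prod_{i=1}^{\tilde g}[M_{\alpha_i},M_{\beta_i}]=M_{\hat\gamma}$, and since the element $\prod_{i=1}^{\tilde g}[\alpha_i,\beta_i]\in\pi_1(\Ccal,\hat v)$ is a based loop homotopic to the separating contour $\gamma$, the matrix $M_{\hat\gamma}$ is conjugate in $SL(2,\C)$ to $M_\gamma$ and therefore diagonalizable. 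Pick $\hat C\in SL(2,\C)$ and $\Lambda=\mathrm{diag}(-\lambda_\gamma,-\lambda_\gamma^{-1})$ with $M_{\hat\gamma}=\hat C\Lambda\hat C^{-1}$, pick \emph{any} $\tilde C\in SL(2,\C)$, set $Q:=\tilde C\mathfrak b\hat C^{-1}$, and define $M_{\tilde\alpha_i}:=QM_{\alpha_i}Q^{-1}$, $M_{\tilde\beta_i}:=QM_{\beta_i}Q^{-1}$ for $i\leq \tilde g$.

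It remains to check that this tilded data is a bona fide point of $\tilde\Hcal$ with $\tilde\Lambda=\Lambda$, so that the factorization \eqref{mapF} applies. A short computation, reusing the key identity, gives
$$
\prod_{i=1}^{\tilde g}[M_{\tilde\alpha_i},M_{\tilde\beta_i}] = QM_{\hat\gamma}Q^{-1} = \tilde C\mathfrak b\Lambda\mathfrak b^{-1}\tilde C^{-1} = \tilde C\Lambda^{-1}\tilde C^{-1},
$$
so by \eqref{Mtdef} we have $M_{\tilde\gamma}=\tilde C\Lambda\tilde C^{-1}$ as required. Since $P=Q^{-1}$, applying \eqref{monC} to the pair of points in $\tilde\Hcal\times\hat\Hcal|_{\tilde\Lambda=\hat\Lambda}$ so obtained literally returns $\{M_{\alpha_i},M_{\beta_i}\}$, giving surjectivity. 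The only point requiring care is the identification of $\gamma\in\pi_1(\Ccal,\hat v)$ with the commutator product on the hatted generators so that diagonalizability of $M_\gamma$ transfers cleanly to $M_{\hat\gamma}$; all other steps are direct matrix algebra.
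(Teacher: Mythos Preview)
Your proof is correct and follows essentially the same route as the paper: both halves proceed by the same direct matrix computation hinging on the identity $\mathfrak b^{-1}\Lambda^{-1}\mathfrak b=\Lambda$. For surjectivity the paper makes the specific choice $\tilde C=C$, $\hat C=C\mathfrak b$ (where $C$ diagonalizes the first block of commutators), while you diagonalize $M_{\hat\gamma}$ first and leave $\tilde C$ arbitrary; this is a cosmetic difference, and your explicit verification that the resulting tilded data lies in $\tilde\Hcal$ with $\tilde\Lambda=\Lambda$ is slightly more careful than the paper's ``read off the points'' line.
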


\begin{proof} 
From the definitions (\ref{pieces}) of $ \t{\Hcal}$ and $ \h{\Hcal}$ one has
\begin{equation} \label{splitlambda}
       \t{\Lambda} = \t{C}^{-1} \left(\prod_{i=1}^{\t{g}} \left[ M_{\t{\alpha}_i}, M_{\t{\beta}_i} \right] \right)^{-1}\!\!\!\! \t{C} \;, \ \ \ 
        \h \Lambda = \h{C}^{-1} \left(\prod_{i=1}^{\h{g}} \left[ M_{\h{\alpha}_i}, M_{\h{\beta}_i} \right] \right)^{-1} \!\!\!\!\h{C}\;.
\end{equation}
    Imposing the constraint \eqref{LL}, $ \t{\Lambda} = \h{\Lambda} $, we get 
    \begin{equation}\label{com}
        \h{C} \bb^{-1} \t{C}^{-1} \left(\prod_{i=1}^{\t{g}} \left[ M_{\t{\alpha}_i}, M_{\t{\beta_i}} \right] \right) \t{C}  \bb \h{C}^{-1} \left( \prod_{i=1}^{\h{g}} \left[M_{\h{\alpha}_i}, M_{\h{\beta}_i} \right] \right)=\mathbb{I}_2 \; .
    \end{equation}
    Defining $ M_{\alpha_i} = \h{C} \bb^{-1} \t{C}^{-1} M_{\t{\alpha}_i} \t{C} \bb \h{C}^{-1} $ and $  M_{\beta_i} = \h{C} \bb^{-1} \t{C}^{-1}  M_{\t{\beta_i}} \t{C} \bb \h{C}^{-1} $ for $ i=1,..., \t{g}$, the equation (\ref{com}) can be rewritten as:
    $$ \prod_{i=1}^{\t{g}} \left[ M_{\alpha_i}, M_{\beta_i} \right] \prod_{i=1}^{\h{g}} \left[ M_{\h\alpha_i}, M_{\h\beta_i} \right] =\mathbb{I}_2\;.  $$
   Now, defining  $ M_{\alpha_{i+\t{g}}} = M_{\h{\alpha}_i} $ and $M_{\beta_{i+\t{g}}}=M_{\h{\beta_i}}$ for $i =1,..., \h{g} $, this last identity can be rewritten as follows:
    \bea
     \prod_{i=1}^{\t{g}} \left[ M_{\alpha_i}, M_{\beta_i} \right] \prod_{i=\t{g}+1}^{\t{g}+\h{g}} \left[ M_{\alpha_i}, M_{\beta_i} \right] =\mathbb{I}_2 \;, 
     \label{MMM}
     \eea
    which coincides with   (\ref{repcl}).

    To prove the surjectivity of this map onto $\V_g^{(\g)}$ one simply has to reverse the process: start from a given representation in $\V_g^{(\g)}$ and choose the same basis of cycles as in the proof above: 
then we have \eqref{MMM}. The matrix $M^{-1}_{\t{\gamma}}$ is, say, the first product of $\t g$ terms, and it is assumed to be diagonalizable, $M_{\t{\gamma}} = C \Lambda C^{-1}$:
$$
 \prod_{i=1}^{\t{g}} \left[ M_{\alpha_i}, M_{\beta_i} \right]  = C \Lambda C^{-1}\;.
 $$
 Similarly, 
 $$
  M^{-1}_{\h{\gamma}}= \prod_{i=1}^{\h{g}}  \left[ M_{\h\alpha_i}, M_{\h\beta_i} \right]  = C \Lambda^{-1}  C^{-1} = C \bb\Lambda  \bb^{-1} C^{-1}.
$$
Thus we read off the points in $\t\Hcal, \h\Hcal$ (with $\h C = C\bb$).
\end{proof}

Let us now use this gluing formalism to  construct the admissible graph $\Gcal_1$ on $\Ccal$  with jump matrices $J$ on its edges  such that the canonical form 
$\Omega(\Gcal_1)$ coincides with the Goldman   form $\Omega$ on $\V_g$.

The graph $\Gcal_1$ is constructed by amalgamation of three graphs (see  Fig. \ref{graphs}):
\begin{itemize}
    \item the graph $\t{\Gcal}_1$, which is a canonical dissection along a set  of generators for the fundamental group, with basepoint $\t v$, of the  capping of  $\t{\Ccal}$,
    \item similar graph   $\h{\Gcal}_1$  on $\h{\Ccal}$,
    \item a five-vertex ``gluing'' graph $\Gcal_{pl}$.
\end{itemize}

 The graph $\Gcal_{pl}$ has two one-valent vertices at $\t{v}$, $\h{v}$, and three four-valent vertices 
which we denote by $\t{q}$, $ q $ and $\h{q}$; these are the points where the segment connecting $\t{v}$ and $\h{v}$ crosses $\t{\g}$, $ \g$ and $\h{\g}$, respectively, see Fig. \ref{graphs}.

The jump matrix map  $J$  on the edges of  $\t{\Gcal}\cup \h{\Gcal}$ is given by 
$$ J(\at_j)  = M_{\bt_j}\,, \hskip0.7cm  J(\bt_j)=  M_{\at_j} \,, \hskip0.7cmJ(\ah_j)=M_{\bh_j}\,, \hskip0.7cm J(\bh_j)= M_{\ah_j}\;.$$
Finally, the jump matrices on the edges of $\Gcal_{pl}$  are given by
$$
    J([\t v, \t q])= J(\t{e}_1)=M_{\t{\g}}\;, \hskip0.7cm \quad J(\t{e}_2 )=J(\h{e}_2)=\Lambda\;, \hskip0.7cm J([\h v, \t q])=J(\h{e}_1)=M_{\h{\g}} \;, 
    $$
    $$
     J(\t{\g})=\Ctt\;, \hskip0.7cm J(\h{\g})=\Chh\;, \hskip0.7cm  J(\g)= \bb \;,
$$
as shown in Fig. \ref{GluingGraph} (recall that $M_{\t{\g}}$ and $ M_{\h{\g}}$ are defined by (\ref{Mtdef}) and (\ref{Mhdef})).

\begin{figure}[h!]
\centering
    \includegraphics[scale=1.1]{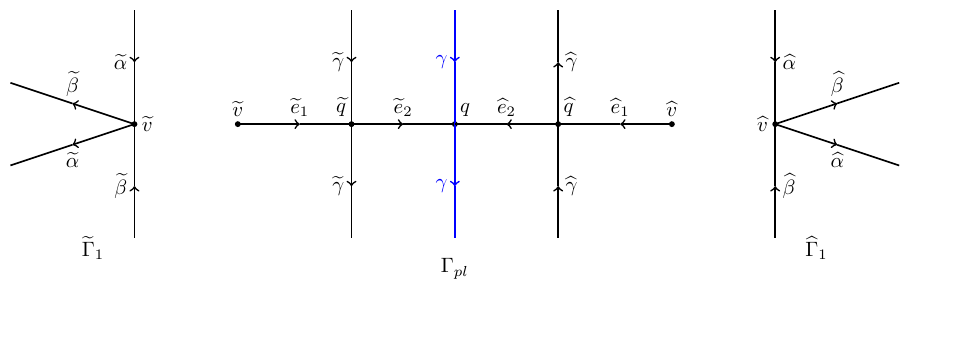}
    \caption{The graphs $ \t{\Gamma}_1$, $ \Gamma_{pl}$ and $ \h{\Gamma}_1$ for $ g=2$. }
    \label{graphs}
\end{figure}

\begin{figure}[!ht]
\centering
\includegraphics[scale=1]{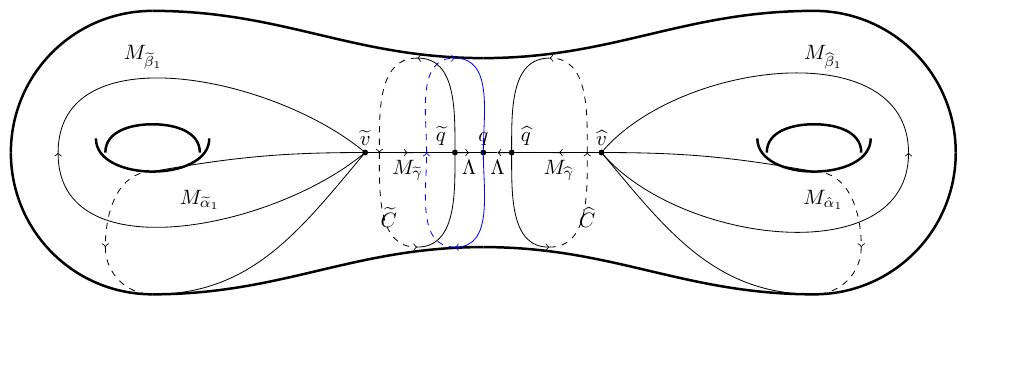}
\caption{The amalgamated graph $ \Gamma_1 $ drawn on $ \Ccal$ for $ g=2$.}
\label{GluingGraph}
\end{figure}

One can verify that the constraint \eqref{JJJ} is satisfied i.e. the total monodromy around each of the five vertices are trivial. Consider, for example, $ q$, $\t{q}$ and $\t{v}$ (the computation at $\h{v}$ and $\h{q}$ is identical).

\begin{itemize}
    \item For $ q$: 
  The four edges incident to $ q$ in counterclockwise order have jump matrices: $ J(e_1)=\bb^{-1}$, $ J(e_2)=\Lambda$, $ J(e_3)=\bb$ and $ J(e_4)=\Lambda$. Then
    $$ J(e_1) J(e_2) J(e_3) J(e_4)=\bb^{-1} \Lambda \bb \Lambda = \Lambda^{-1} \Lambda=\Id\;. $$
    \item For $\t{q}$:
the jump matrices are given by 
$$ 
J(e_1)=M_{\t{\g}}      \,, \hskip0.7cm         J(e_2)=\t{C}     \,, \hskip0.7cm         J(e_3)=\Lambda^{-1}\,, \hskip0.7cm     J(e_4)=\t{C}^{-1}\;,
     $$
  and
     $$ 
     J(e_1)J(e_2)J(e_3)J(e_4)=M_{\t{\g}} \t{C} \Lambda^{-1} \t{C}^{-1}=M_{\t{\g}}M_{\t{\g}}^{-1}=\Id\;.
     $$
     \item For $ \t{v}$:
   we have
    $$ 
    J(e_1)=M_{\t{\alpha}_1}\;, \hskip0.7cm J(e_2) = M_{\t{\beta}_1}^{-1} \;, \hskip0.7cm
    J(e_3)=M_{\t{\alpha}_1}^{-1}  \;, \hskip0.7cm   J(e_4)=M_{\t{\beta}_1}\;,
    $$ 
    $$
    J(e_5)=M_{\t{\alpha}_2}\,, \dots, \hskip0.7cm J(e_{4 \t{g}+1})=J( \tilde{e})=M_{\t{\g}}^{-1}\;.$$ 
    Therefore, 
    $$ \prod_{i=1}^{4\t{g}+1} J(e_i)= \prod_{i=1}^{\t{g}}\left[ M_{\t{\alpha}_i}, M_{\t{\beta}_i} \right] M_{\t{\g}}^{-1} = \Id\;. $$
\end{itemize}
The graph $ \Gamma_1$ can be transformed into the canonical dissection  graph $ \Gamma_0$ by a sequence of admissible moves in the sense of \cite{bertola2023extended}:
\begin{enumerate}
    \item Merge the vertices $ \t{v}$ with $\t{q}$ and $ \h{v}$ with $ \h{q}$.
    \item Merge the vertices $ q$ ,  $\t{q}$ and $ \h{q}$.
    \item Zip together the three edges $ \g $,  $ \t{\g}$ and $ \h{\g}$ whose jump matrices are respectively $ \bb$,  $ \t{C}^{-1} $ and $ \h{C}$ to get a single edge $ \g_0$ with jump matrix $ J(\g_0)=\h{C} \bb^{-1} \t{C}^{-1}$.
    \item Zip the edge $ \g_0$ with the edges $ \{ \t{\alpha}_i, \t{\beta}_i \}_{i=1}^{\t{g}}$ to get edges $ \{ \alpha_i, \beta_i \}_{i=1}^{\t{g}}$ with corresponding jump matrices: $$ J(\beta_i)= M_{\alpha_i} = \h{C} \bb^{-1} \t{C}^{-1} M_{\t \alpha_i} \t{C} \bb \h{C}^{-1} \;,\hskip0.8cm
    J(\alpha_i)= M_{\beta_i} = \h{C} \bb^{-1} \t{C}^{-1}  M_{\t \beta_i} \t{C} \bb \h{C}^{-1}$$
    for  $i=1,...,\t{g}$.    
    \end{enumerate}
As a result of this transformation we see that  the two canonical symplectic forms coincide:  $ \Omega(\Gamma_1)= \Omega(\Gamma_0)$. In other words, one can use the graph $ \Gamma_1$ to compute the Goldman  symplectic form $ \Omega $ on $\V_g $; further transformation of the graph $\Gamma_1$ will be discussed in the next section.

\subsection{Oriented graph on $\Ccal$ from a  graph on a Riemann surface of lower genus with two boundaries}
\label{nonsep}

We now consider the case when $\gamma$ is a non-separating contour on $\Ccal$ i.e. dissection of  $\Ccal$ along $\gamma$ produces  a Riemann surface $\Ccal_{g-1}$ of genus $g-1$ with two boundaries. 
To construct the corresponding system of log-canonical coordinates on  $\V_g^{(\gamma)} $ 
we start from the space $\Hcal_{g-1,2}$ (see \eqref{ext}) and then prove the following analog of Prop.\ref{fact}:
\begin{prop}\label{fact2}
    Consider a set of matrices \eqref{ext} representing a point of  $\Hcal_{g-1,2}$ such that  $\Lambda_1=\Lambda_2=\Lambda$. Then we define the surjective map 
\be
F:  \Hcal_{g-1,2} \Big|_{\Lambda_1=\Lambda_2 = \Lambda } \to \V_g^{(\gamma)}
\la{mapF2}
\ee
 by constructing   the  set of  matrices $\{M_{\alpha_j}, M_{\beta_j}\}_{j=1}^g$ representing a point in  $\V_g^{(\gamma)}$ as follows: the matrices $\{M_{\a_j}, M_{\b_j}\}_{j=1}^{g-1}$  remain  the same as in $\Hcal_{g-1,2}$   and 
 \be
   M_{\alpha_g} = C_1  \Lambda C_1^{-1} = M_1;,\hskip0.6cm 
           M_{\beta_g} = C_1 \bb C_2^{-1}\;. 
 \la{MaMb}
\ee
    The map \eqref{mapF2}  is surjective.
\end{prop}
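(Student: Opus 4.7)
\smallskip

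The plan is to mirror the strategy of Proposition~\ref{fact}: first verify that the defining relation \eqref{repcl} holds for the matrices in \eqref{MaMb}, and then produce an explicit inverse to the map $F$ to get surjectivity. The essential ingredient, as in the separating case, is the relation $\bb^{-1}\Lambda\bb = \Lambda^{-1}$, equivalent to $\bb\Lambda\bb^{-1}=\Lambda^{-1}$, which is the reason the toric matrix $\bb$ appears in the definition of $M_{\beta_g}$.

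The first step is a direct computation of the commutator $[M_{\alpha_g},M_{\beta_g}]$. Using $M_{\alpha_g}=C_1\Lambda C_1^{-1}$, $M_{\beta_g}=C_1\bb C_2^{-1}$, and the identity $\bb^{-1}\Lambda^{-1}\bb=\Lambda$, one finds
\begin{equation*}
[M_{\alpha_g},M_{\beta_g}] = C_1\Lambda C_1^{-1}\,C_2\bb^{-1}\Lambda^{-1}\bb\,C_2^{-1} = \bigl(C_1\Lambda C_1^{-1}\bigr)\bigl(C_2\Lambda C_2^{-1}\bigr).
\end{equation*}
Substituting $\Lambda_1=\Lambda_2=\Lambda$ into the defining relation of $\Hcal_{g-1,2}$ then gives $[M_{\alpha_g},M_{\beta_g}]\prod_{i=1}^{g-1}[M_{\alpha_i},M_{\beta_i}]=\mathbb{I}_2$, which is a cyclic permutation of \eqref{repcl}; since cyclic permutations of a word equal to the identity are still equal to the identity in any group, the relation \eqref{repcl} is verified. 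Moreover $M_\gamma=M_{\alpha_g}$ is diagonalizable by construction (being conjugate to $\Lambda$ with distinct eigenvalues), so the image genuinely lies in $\V_g^{(\gamma)}$.

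For surjectivity, I would construct an explicit section. Given a class $[\{M_{\alpha_i},M_{\beta_i}\}_{i=1}^g]\in\V_g^{(\gamma)}$, choose representatives with $M_\gamma=M_{\alpha_g}$ and a diagonalization $M_{\alpha_g}=C_1\Lambda C_1^{-1}$ (unique up to the toric action by a diagonal matrix from the right on $C_1$, which is precisely the redundancy built into $\Hcal_{g-1,2}$). Then set
\begin{equation*}
C_2 := M_{\beta_g}^{-1}\,C_1\,\bb,
\end{equation*}
so that the relation $M_{\beta_g}=C_1\bb C_2^{-1}$ holds tautologically. A short calculation, again using $\bb\Lambda\bb^{-1}=\Lambda^{-1}$, shows that
\begin{equation*}
C_2\Lambda C_2^{-1}= M_{\beta_g}^{-1}C_1\Lambda^{-1}C_1^{-1}M_{\beta_g} = M_{\beta_g}^{-1}M_{\alpha_g}^{-1}M_{\beta_g},
\end{equation*}
so that $C_1\Lambda C_1^{-1}\cdot C_2\Lambda C_2^{-1}=[M_{\alpha_g},M_{\beta_g}]$. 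Combining this with \eqref{repcl} (cyclically permuted to pull $[M_{\alpha_g},M_{\beta_g}]$ to the front) yields precisely the defining relation of $\Hcal_{g-1,2}$ with $\Lambda_1=\Lambda_2=\Lambda$. Thus we have a preimage under $F$, proving surjectivity onto $\V_g^{(\gamma)}$.

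The main conceptual point to justify carefully, rather than the algebra (which is routine), is the identification of the homotopy class of the cutting contour $\gamma$ with the generator $\alpha_g$: one must pick the standard basis of $\pi_1(\Ccal,v_0)$ so that after cutting along $\gamma$ the two boundary loops of $\Ccal_{g-1}$ realize the monodromies $M_{\alpha_g}$ and $M_{\beta_g}^{-1}M_{\alpha_g}^{-1}M_{\beta_g}$ respectively; this is what makes the cyclic-permutation identification between the $\Hcal_{g-1,2}$-relation and \eqref{repcl} work. Once this topological set-up is fixed, everything reduces to the $\bb\Lambda\bb^{-1}=\Lambda^{-1}$ identity exploited above.
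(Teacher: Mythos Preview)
Your argument is correct and follows essentially the same route as the paper's: both verify that $C_1\Lambda C_1^{-1}\,C_2\Lambda C_2^{-1}=[M_{\alpha_g},M_{\beta_g}]$ via the identity $\bb^{-1}\Lambda^{-1}\bb=\Lambda$, match this with the $\Hcal_{g-1,2}$ relation (up to a cyclic permutation of \eqref{repcl}), and then reverse the construction for surjectivity. Your surjectivity argument is in fact more explicit than the paper's, which simply refers back to the analogous step in Proposition~\ref{fact}; your choice $C_2=M_{\beta_g}^{-1}C_1\bb$ is exactly the right section.
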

\begin{proof}
The proof is very similar to that of Prop.\ref{fact}. 
From the definition of $\Hcal_{g-1,2}$ we have 
\be
C_1 \Lambda_1 C_1^{-1} C_2 \Lambda_2 C_2^{-1} \prod_{j=1}^{g-1} \big[M_{\a_j}, M_{\b_j}\big] = \Id. 
\ee
Let us now  realize the pre-factor to the product as a commutator. Under  the constraint $\Lambda_1=\Lambda_2 = \Lambda$ one can write 
$$
C_1 \Lambda C_1^{-1} C_2 \Lambda C_2^{-1} = C_1 \Lambda C_1^{-1} C_2  \bb^{-1} C_1^{-1} C_1\Lambda^{-1}C_1^{-1} C_1 \bb C_2^{-1} = M_{\a_{g}} M_{\b_g}^{-1} M_{\a_g}^{-1} M_{\b_g}\;. 
$$
where $M_{\a_{g}} $  and $M_{\b_g}$ are given by (\ref{MaMb}).
The rest of the proof proceeds in complete analogy to that of Prop. \ref{fact}.
\end{proof}

Similarly to the separating case, we construct the graph $\Gamma_1$ on $\Ccal$ as follows. First, consider the canonical dissection of the curve $\Ccal_{g-1}$ along the set of generators of     
$\pi_1(\Ccal_{g-1},x_0)$ for some initial point $x_0$ with two edges connecting $x_0$ to two other points $\hat{v}$ and $\tilde{v}$. The plumbing graph with two one-valent vertices also denoted by
$\hat{v}$ and $\tilde{v}$ is then amalgamated with the above graph on  $\Ccal_{g-1}$  as shown in Fig.\ref{twoedges}. This gives the graph $\Gamma_1$ in the case of the non-separating cycle.
Similarly to the separating case, it can be deformed to  $\Gamma_0$ by a sequence of admissible moves, and, therefore, $\Omega(\Gamma_1)=\Omega(\Gamma_0)$.

\begin{figure}[h!]
\centering
    \includegraphics[scale=1.1]{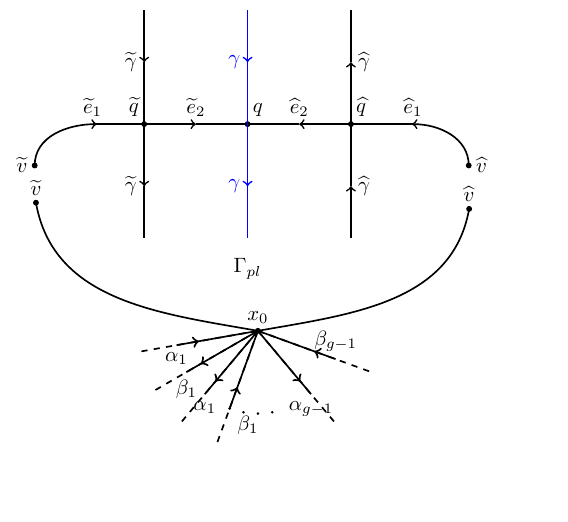}
    \caption{The graph $\Gamma_2$ in the case of one non-separating contour. }
    \label{twoedges}
\end{figure}


\section{Triangulations of dissected surface and log-canonical  coordinates}
\label{CtCh}
\subsection{  Dissection along a separating contour }
\label{CtCh1}
For explicit computation of the Goldman  form in terms of  log-canonical coordinates we are going to construct one more admissible pair $(\Gamma_2,J)$  on $\Ccal$, equivalent to  the pairs $(\Gamma_1,J)$ and $(\Gamma_0,J)$.
{\color{blue}
}
The graph $\Gamma_2$ and jump matrices on its edges are  constructed via amalgamation of the following three graphs:
\begin{enumerate}
    \item 
    The graph $\t\Gamma_2$, constructed as follows.
    Fix arbitrarily an orientation of the edges in the    triangulation  $ \t{\Sigma} $ of the capping\footnote{Recall that the given a surface with boundaries, its capping (or capped surface) is the surface without boundary obtained by pasting back a disk for each connected component of the boundary.}  of  $\t{\Ccal}$,  with a single vertex $ \t{v}$ such that the boundary $\gamma$ and contour $\t \g$ fall into one of the triangles.   To each edge $\t e$ of the triangulation $\t \Sigma$ we associate a parameter $z_{\t e}\in \C^\times$ which we call (complex) ``shear (-type) coordinate''. We will set  $\zeta_{\t e}=\ln \t z_e$ (ditto for edges of $\h \Sigma$). Our final goal is to show that the symplectic form has constant coefficients when expressed in the $\zeta$ - coordinates: for this reason the choice of branch of the logarithm will not play any practical role.
      The number of edges of $ \t{\Sigma} $ equals $ 6 \t{g}-3$. The jump matrices on  the edges $\t e$ of  $ \t{\Sigma} $ are chosen as follows
  \be
    J(\t e)={{S}}_{\t e}= \begin{pmatrix}
    0 &  {z}_{\t e} \\
    -\frac 1{{z}_{\t e} }& 0 \\
    \end{pmatrix}, \quad \quad {z}_{\t e} \in \C^\times  \;.
\label{defS_e}
   \ee   
    Note\footnote{Relative to Section 7 of \cite{bertola2023extended}, we adopt a different convention so that the $z_e$ here correspond to the $\frac 1 {z_e}$ there.} that $  S_{ \t e}^{-1}=    - S_{ \t e} =   S_{- \t e}$. 
    
   For each triangle, $\t F_k$, of  $ \t{\Sigma} $  we add an extra internal vertex $ \t{w}_k$ connected to the three instances of $ \t{v}$ on the boundary of $\t F_k$ by three  isotopically distinct internal edges oriented toward $ \t{w}_k$.
   To each of these newly added internal edges (shown in red in Fig.\ref{Graph2} and Fig.\ref{flatpic}) we assign  the constant jump matrix: 
    \be
    \label{defAm}
    A = \begin{pmatrix}
    0 & -1 \\
    1 & -1 \\
\end{pmatrix} \;.
\ee
Observe that $A^3=\Id$, which ensures the condition \eqref{JJJ} at the newly added vertex $\t{w}_k$.

    \item
  The graph $\h\Gamma_2$ is constructed in completely analogous fashion on the $\h\Ccal$ component.
 \item The ``gluing''  5-vertex graph part   $ \Gamma_{pl}$ is as in Fig.\ref{graphs}, with  the jump matrices on its edges 
    defined in terms of the variables $\t \zeta_{e_j}$ and $\h \zeta_{e_j}$ as follows:
    
 \begin{enumerate}
 \item 
   On the edge $[\t{v}, \t{q}]$ we set  $ J([\t{v}, \t{q}]) = M_{\t{\g}}$, where  $M_{\t{\g}}$ is  computed imposing  the condition \eqref{JJJ} at the vertex $\t{v}$, by evaluating the inverse of the product of the other jump matrices.

   The  valence of $\t v$ is  $n_{\t v} = 2(6\t g-3)$ since each edge has both ends at $\t v$. Starting from the first occurrence of an edge of the triangulation $\t \Sigma$  in counterclockwise order from the edge $[\t v , \t q]$ (see Fig.\ref{graphs}) we encounter the same edge twice (but not consecutively). We use  the following convention:  by ``half-edge'' at a vertex $v$ we mean the part of the edge that belongs to a small disk centered at $v$ and denote it by  $h$. For a given vertex $v$  order the half-edges in cyclic order,  $h_1,\dots, h_{n_v}$ and then denote by $z_{h_j}$ (by slight abuse of notation) the shear-type coordinate associated to the edge $e$ the half-edge $h_j$ belongs to. This way, in the cyclic order, the same shear coordinate appears twice (with opposite signs due to $S^{-1}= -S$). We adopt the same liberty for the indexing of the matrices $S_{h_j}$. Then, 
    after a simple computation we find that $M_{\t\g}$ is  lower-triangular
\be
    M_{\t{\g}} = \left( \prod_{l=1}^{12\t g -6} A {S}_{\t h_l} \right)^{-1}= \begin{pmatrix}
    -\lambda_{\t{\g}} & 0 \\
    * & - \lambda_{\t{\g}}^{-1} \end{pmatrix}\;,
    \label{Mtg}
\ee
    where the matrices $S_e$'s are defined in \eqref{defS_e}, $A$ in \eqref{defAm}  and the eigenvalue is related to the shear coordinates as follows:
    \be
    \lambda_{\t{\g}}=  \prod_{\t e\in \t \Sigma}   z^2_{\t e}\;,\qquad\ \ \ell_{\t{\gamma}} = 2\sum_{\t e\in \t \Sigma} {\zeta}_{\t e} \mod 2\pi i\;.
    \la{consintro}
    \ee
     The product (and sum) is over the $6\t g-3$ edges of the triangulation $\t \Sigma$ and the minus sign in \eqref{Mtg} comes from the fact that in the product we have an odd number ($6g-3$) of inverse matrices $S_{e}$, keeping in mind that $S_e^{-1} =-S_e$. 
\item
    On the closed edge $\t \g = [\t{q}, \t{q}]$ (see Fig.\ref{graphs} and Fig.\ref{GluingGraph})   we set $J( \t \g) = C_{\t \g}$ where 
   ${C}_{\t \g}$ is a  diagonalizing matrix  for $M_{\t{\g}}$: we normalize the eigenvector by imposing that  it has the form
    $$
     C_{\t \g}=\begin{pmatrix}
    1 & 0 \\
    * & 1 \end{pmatrix}\;.
$$
The general    $ SL(2, \C) $ matrix ${\Cp}_{\t \g} $ diagonalizing $ M_{\t{\g}}$ 
has therefore  the form
    \be
     {\Cp}_{\t \g}= {C}_{\t \g}  \begin{pmatrix}
    b_{\t \g}^{-1} & 0 \\
    0 & b_{\t \g}\end{pmatrix} \;,
    \la{CCt} 
    \ee
    where the complex number $b_{\t \g}$ will be referred to as the  {\it toric variable}.  The corresponding logarithmic coordinate is denoted by $ \beta_{\t \g }=\ln{b_{\t \g}}$.
\item
    A set of formulas  completely parallel to \eqref{Mtg}, \eqref{consintro}, \eqref{CCt} can be repeated for the $\h{\Ccal}$ side, thus defining $M_{\h{\g}} ,  \lambda_{\h{\g}},  C_{\h \g}$.
    \end{enumerate}

\item We require that the shear coordinates   satisfy the  constraint $\l_{\t\g} = \l_{\h\g}$:
  \be
    \prod_{j=1}^{6 \t{g}-3}   z^2_{\t e_j} =\prod_{i=1}^{6 \h{g}-3}   z^2_{\h e_i} \;,
    \la{zz}
 \ee
    or, in terms of their logarithms 
        \be
    \sum_{j=1}^{6 \t{g}-3}   \zeta_{\t e_j}   =  \sum_{i=1}^{6 \h{g}-3} \zeta_{\h e_i} \ \mod{\pi i} .
    \la{zetazeta}
    \ee
    \end{enumerate}

Note that the   condition $
 \lambda_{\h{\g}}= \lambda_{\t{\g}}
 $   guarantees the no-monodromy condition \eqref{JJJ} at the vertex $q$:
\be
\t{\Lambda} \bb \h{\Lambda} \bb^{-1} = \Id. 
\ee
In the sequel we  denote 
$$
\lambda_{\g} :=\lambda_{\t{\g}}=  \lambda_{\h{\g}}\;,  \hskip1.2cm  \len_\g := \ln \lambda_{\g}\;.
$$
An example of graph $\Gamma_2$ for genus 2 surface $\Ccal$ is shown in  Fig.\ref{Graph2}.

\begin{figure}[!ht]
\centering
\includegraphics[scale=1]{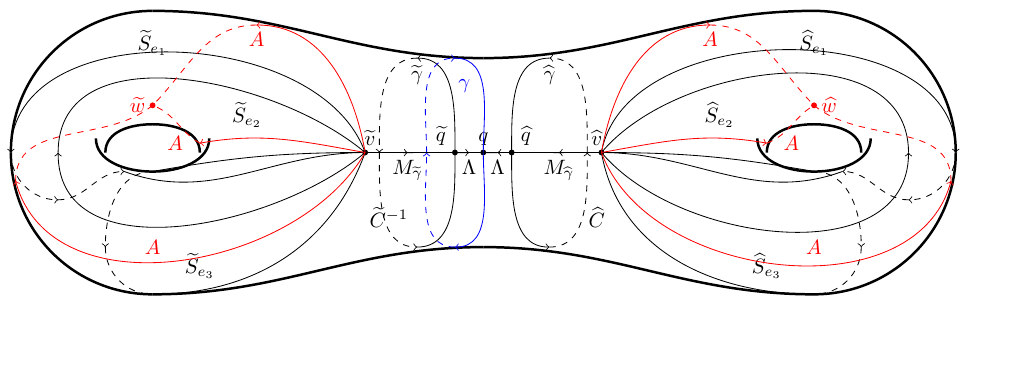}
\caption{The graph $ \Gamma_2 $ drawn on $ \Ccal$ for $ g=2$.}
\label{Graph2}
\end{figure}

Another representation of $ \Gamma_2$, with explicitly shown triangulation,  is given schematically in Fig.\ref{flatpic}. To get from Fig.\ref{Graph2} to Fig.\ref{flatpic} we cut the genus two surface $\Ccal$ along the contour $ \g$ (shown in blue) positioned  between $\t{\g}$ and $\h{\g}$. Then the edge $e$ connecting $\t q$ and $\h q$ is cut in half, leaving the half-edges inside the small circles in both genus one components  
shown in Fig.\ref{flatpic}.

\begin{figure}[h]
\centering
\includegraphics[scale=1]{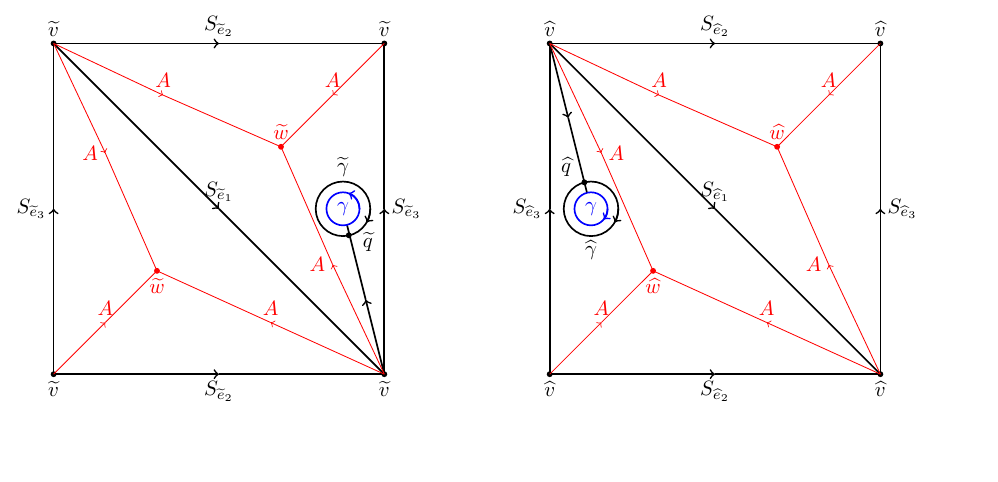}
\caption{Triangulations of genus one components $ {\Ccal}_{\t g} $ and   $ {\Ccal}_{\h g} $ for $g=2$. The segments 
    inside of the circles represent the two edges $[\t{q}, q]$ and $ [q, \h{q}]$ in Fig.\ref{Graph2}.}
    \label{flatpic}
\end{figure}

The graph $ \Gcal_2 $ can be transformed into the graph $ \Gcal_1$ (defined in Sec.\ref{Sec5.1} and shown in an example in Fig.\ref{GluingGraph})  by the following sequence of admissible moves:
\begin{enumerate}
    \item Merge  the  internal vertices $ \t{w}_k$ with $ \t{v}$ and $ \h{w}_k $ with $ \h{v}$.
    \item Zip together the edges with jump matrices $ A$ and the edges with jump matrices $ {S}_{\t e_i}$ (resp. $ {S}_{\h e_i}$) to get edges with jump matrices $ A {S}_{\t e_i} $ or $ A^{-1} {S}_{ \t e_i}$ (resp. $ A {S}_{\h e_i} $ or $ A^{-1} {S}_{\h e_i}$).
    \item Zip together  the edges of the triangulations $ \t{\Sigma} $ and $ \h{\Sigma} $ to get  the set of edges $ \{ \t{\alpha}_i, \t{\beta}_i \}_{i=1}^{\t{g}}$ and  $ \{ \h{\alpha}_i, \h{\beta}_i \}_{i=1}^{\h{g}}$. The jump matrices 
    \be
     \{ M_{\t{\alpha}_j}\;,  M_{\t{\beta}_j}\;,  M_{\h{\alpha}_j}\;,  M_{\h{\beta}_j} \}
 \la{MMM2}\ee
    on these new edges become then some products of $ {S}_{\t e_i} $ $  {S}_{\h e_i} $, $A$ and $A^{-1}$. 
\end{enumerate}

The monodromy matrices $M_{\alpha_j}$, $ M_{\beta_j}$ can be obtained from the matrices (\ref{MMM2}) using formulas (\ref{monC}) with $C_{\h\g}$ and $C_{\t\g}$ replaced by the general diagonalizing matrices $ C'_{\t\g}$ and $ C'_{\h\g}$ (\ref{CCt}).

The  matrix
\be
{C_{\t \g}'}^{-1}  \begin{bmatrix}
0&-1\\1&0
\end{bmatrix}
{C'_{\h \g}}=  C_{\t \g}^{-1}  \begin{bmatrix}
0&-b_{\t \g}\,  b_{\h \g} \\ \left(b_{\t \g} \, b_{\h \g}\right)^{-1}&0
\end{bmatrix} {C_{\h\g}} 
\ee
depends only on the product of the toric variables;   thus one  can define the new logarithmic toric variable
\be
\beta_\g=2(\beta_{\t \g}+ \beta_{\h \g})\;,  \hskip0.7cm  \beta_{\t\g} = \log b_{\t\g},\;, \hskip0.7cm  \beta_{\h\g} = \log b_{\h\g}\;.
\la{betadef}
\ee
where the factor $2$ is  introduced for future convenience.

\subsection{Graph $\Gamma_2$ in case of  non-separating contour}
\la{CtCh2}
We now have a contour $\gamma$ such that $\Ccal\setminus \gamma$ is a Riemann surface $ \Ccal_{g-1}$ of genus $ g-1$ with two boundary components. Choose  $\t v, \h v$ in the same  annular neighbourhood of $\gamma$ and on the either  sides of it  and two more loops $\t \g, \h \g$ homotopically equivalent to $\g$ . We perform a triangulation, $\Sigma$, of the capping of  $ \Ccal_{g-1}$ and with two vertices at $\t v,\h v$. For each face of this triangulation we select a point and add three additional internal edges from it, exactly as in step $1$ of section \ref{CtCh1}, resulting in a graph $\Gamma^{(g-1)}_2$. The matrices on its edges  are defined in the same way as in Section \ref{CtCh1}.   The final graph $\Gamma_2$ is then obtained, similarly, by gluing $\Gamma^{(g-1)}_2$ with the same plumbing graph $\Gamma_{pl}$ as in Section \ref{CtCh1} and shown in Fig. \ref{graphs}. 
In order to perform the gluing we need to impose a log-linear constraint on the  complex shear coordinates;
\be
\label{loglin}
\prod_{e\perp \t v \atop e\in \Sigma }(-1)^{o_e(\t v)}  z_e^{\mu_e(\t v)}  =- \lambda_\g  = \prod_{e\perp \h v \atop e\in \Sigma}(-1)^{o_e(\h v)}  z_e^{\mu_e(\h v)}\;,
\ee
where $o_e(v) = 1$ if the oriented edge  of $\Sigma$ has one end terminating at $v$ and $o_e(v)=0$ otherwise\footnote{ To be completely clear, an edge may have both ends at a vertex $v$, in which case  $o_e(v)=1$; if only one end is at $v$ and that is oriented away then $o_e(v)=0$, while if it is oriented towards $v$ then $o_e(v)=1$.}.
Here $\mu_e({v)} \in \{1,2\}$ denotes the multiplicity of the edge $e$ at the vertex $v$, namely, it is  the number of endpoints of $e$ coinciding with the vertex $v $.  

\begin{figure}[h]
\centering
\includegraphics[scale=1]{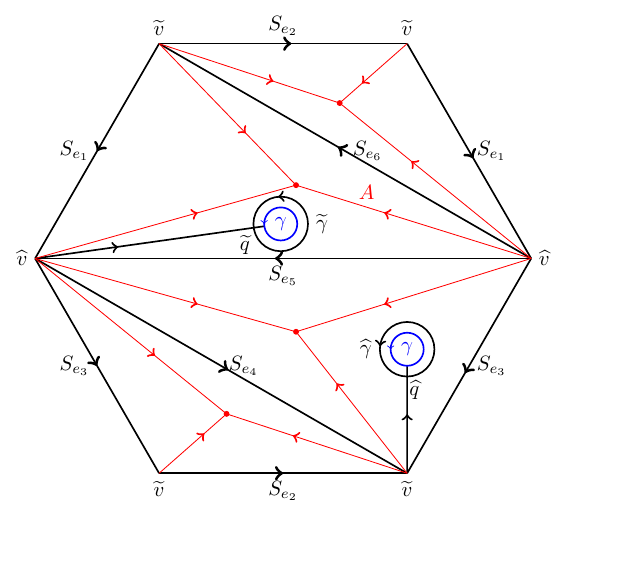}
\caption{Triangulations of a genus one component $ {\Ccal}_{g-1} $  with two boundary components to be glued. The segments 
    inside of the circles represent the two edges $[\t{q}, q]$ and $ [q, \h{q}]$ in Fig.\ref{graphs}, while the blue circles are going to be identified. In this case the relation \eqref{loglin} reads 
    $z_{e_1}z_{e_3}z_{e_4}z_{e_5}^2z_{e_6} = z_{e_1}z_{e_2}^2z_{e_3}z_{e_4}z_{e_6}$.}
    \label{flatpic}
\end{figure}
 
Similarly to the separating case, we can deform the graph $\Gamma_2$ to $\Gamma_1$ by  a series of admissible transformations, and, therefore,
the Goldman form on ${\mathcal V}_g^{\gamma}$ can be represented as $\Omega(\Gamma_2)$ and further expressed in terms of shear coordinates and toric variables as we show in the next section.

\section{Goldman symplectic form  and  $\Omega(\Gcal_2)$}

\subsection{Case of non-separating contour}

We now show that the Goldman symplectic form has constant coefficients when expressed in terms of the logarithmic shear coordinates and additional coordinates  $\ell_\g$ and  $\beta _\g$. 
For a given vertex $v$  of valence $n_v$ we denote $e^{(v)}_j$ , $j=1,\dots, n_v$ the edges incident at $v$ enumerated in the cyclic order, with the understanding that a returning edge appears twice, so that for example $e_j^{(v)} = e_k^{(v)} = e$ if the same edge $e\in E(\Gamma)$  returns to $v$. Recall the triangulations $\t \Sigma ,\h \Sigma$ defined in Section \ref{CtCh1}.

\begin{theo} \label{maintheo}
Let $\zeta_{e}= \ln z_e$ for any edge $e$ and set  $\ell_\g = \ln \lambda_\g, \ \ \beta_\g = \ln b_\g$. Suppose that the logarithms satisfy the linear constraints 
\be \la{cons3}
 \len_\gamma =2\sum_{ \t e\in \t \Sigma}   \zeta_{ \t e}=2\sum_{\h e\in \h\Sigma }  \zeta_{\h e}\;.
\ee
Then Goldman  form  $\Omega$ on $ \V_g$ can be written as follows in terms of logarithmic variables, subject to  (\ref{cons3}):
\be
  \Omega=\Omega_{0}+ {\Omega_{\t v}}+{\Omega_{\h v} } 
   \la{WPint2}
  \ee
where 
$$
\Omega_0=\mathrm{d}\beta_\gamma \wedge \mathrm{d}\len_\gamma  \;,
$$
$$
{\Omega_{\t v}}= \sum_{\substack{i,j=1 \\ i<j} }^{12 \t{g}-6} 
 \mathrm{d}\zeta_{\t h_i} \wedge \mathrm{d} \zeta_{\t h_j}\;,\hskip0.7cm
{\Omega_{\h v}}=\sum_{\substack{k,l=1 \\ k<l} }^{12 \h g-6}  \mathrm{d}\zeta_{\h h_k} \wedge \mathrm{d}\zeta_{\h h_l}\;,
$$
and $\t h_j$ are the half edges incident at $\t v$ as described above \eqref{Mtg} and enumerated counterclockwise starting from the first edge of the triangulation $\t \Sigma$ after the edge $[\t v, \t q]$ (see Fig. \ref{Graph2}). Ditto for $\h h_k$'s. 

\vskip0.5cm

\end{theo}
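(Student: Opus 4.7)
The strategy is to apply the vertex-sum formula \eqref{OmegaG} directly to $\Gamma_2$ and expand the result. From the chain of admissible moves at the end of Section \ref{CtCh1} we already know $\Omega(\Gamma_2) = \Omega(\Gamma_1) = \Omega(\Gamma_0) = \Omega$, so the task reduces to the explicit re-expression of $\Omega(\Gamma_2)$ in the coordinates $\zeta$, $\ell_\g$, $\beta_\g$.

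I would partition the vertex set of $\Gamma_2$ into three groups. The internal triangle-centre vertices $\t w_k, \h w_k$ carry only edges with the constant matrix $A$ of \eqref{defAm}, so $\d A = 0$ kills their contributions in \eqref{OmegaG}. At the central plumbing vertex $q$ the four outgoing jump matrices are cyclically $\bb^{-1}, \Lambda, \bb, \Lambda$; a short direct computation using $\d\bb = 0$, the conjugation $\bb^{-1}\Lambda\bb = \Lambda^{-1}$, and the fact that $\Lambda$ is diagonal collapses every trace term in \eqref{OmegaG} to zero. At the side plumbing vertices $\t q, \h q$ the matrices are $M_{\t\g}, \t C', \Lambda^{-1}, (\t C')^{-1}$ and their hatted analogues; parameterising $\t C' = \t C\,\mathrm{diag}(b_{\t\g}^{-1}, b_{\t\g})$ with $M_{\t\g} = \t C'\Lambda(\t C')^{-1}$, a direct computation extracts the toric-length terms $\d\beta_{\t\g}\wedge \d\ell_\g$ and $\d\beta_{\h\g}\wedge \d\ell_\g$, which assemble through \eqref{betadef} into $\Omega_0 = \d\beta_\g \wedge \d\ell_\g$.

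The main computation is at the vertices $\t v$ and $\h v$. At $\t v$ the incident half-edges are of three kinds: the $12\t g -6$ triangulation half-edges $\t h_j$ with shear matrices $S_{\t h_j}$, the $12\t g - 6$ internal half-edges toward the $\t w_k$'s with constant $A$, and the single extra half-edge $[\t v, \t q]$ with jump $M_{\t\g}$ given by \eqref{Mtg}. Because $\d A = 0$, the trace pieces in which $J_l$ is internal drop out, and only the cross-differentials of the $S_{\t h_j}$'s and of $M_{\t\g}$ survive in $\Omega_{\t v}$. This is the same style of computation already carried out for shear coordinates on a triangulated bordered surface in Section 7 of \cite{bertola2023extended}; adapting it here isolates the symmetric sum $\sum_{i<j} \d\zeta_{\t h_i} \wedge \d\zeta_{\t h_j}$ together with residual cross-terms between $\d\ell_\g$ and the $\d\zeta_{\t h_j}$'s arising from the $M_{\t\g}$-edge. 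The analysis at $\h v$ is entirely parallel.

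Finally I would collect the pieces and impose the linear constraint \eqref{cons3}, which identifies $\d\ell_\g$ with $2\sum_i \d\zeta_{\t h_i}$ and with $2\sum_j \d\zeta_{\h h_j}$ on the corresponding half-edges. Under this restriction the residual $\d\ell_\g \wedge \d\zeta$ cross-terms from $\t v, \h v$ have to cancel against the analogous contributions from $\t q, \h q$, leaving precisely the three pieces displayed in \eqref{WPint2}. The main technical obstacle is exactly this bookkeeping of mixed $\d\ell \wedge \d\zeta$ and $\d\beta \wedge \d\zeta$ terms; it is however guaranteed in principle by the a priori identification $\Omega(\Gamma_2) = \Omega$ already established, so the task is just to verify that each term in the explicit vertex-wise expansion lands in one of the three displayed pieces.
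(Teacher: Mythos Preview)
Your overall strategy is the same as the paper's: apply \eqref{OmegaG} to $\Gamma_2$ vertex by vertex and sum. Your identification of the null contributions (the centres $\t w_k,\h w_k$ via $\d A=0$, and the central plumbing vertex $q$ via $\d\bb=0$) agrees with the paper, as does the extraction of the toric--length pieces from $\t q,\h q$.

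The one place your proposal misreads the situation is the anticipated ``residual $\d\ell_\g\wedge\d\zeta$ cross-terms from $\t v,\h v$'' that would ``cancel against the analogous contributions from $\t q,\h q$''. In the paper's computation no such cancellation occurs, because neither side produces such terms. At $\t q$ the direct evaluation of \eqref{OmegaG} collapses to $\tr\big(\Lambda^{-1}\d\Lambda\wedge (C'_{\t\g})^{-1}\d C'_{\t\g}\big)$; since the normalized eigenvector matrix $C_{\t\g}$ is lower--unipotent, its logarithmic derivative is strictly lower triangular and only the diagonal toric factor $\mathrm{diag}(b_{\t\g}^{-1},b_{\t\g})$ survives the trace against the diagonal $\Lambda^{-1}\d\Lambda$, giving exactly $2\,\d\beta_{\t\g}\wedge\d\ell_\g$ with no $\d\zeta$ remainder. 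At $\t v$ the edge $[\t v,\t q]$ is placed \emph{last} in the cyclic order, so by the structure of \eqref{OmegaG} (sum to $n_v-1$) its jump $M_{\t\g}$ never appears explicitly; the remaining alternation of $S$'s and constant $A$'s is precisely the computation of \cite{bertola2023extended}, Section~7, which yields $\sum_{i<j}\d\zeta_{\t h_i}\wedge\d\zeta_{\t h_j}$ on the nose. So each vertex contribution is already in final form and there is no inter-vertex bookkeeping to perform; the constraint \eqref{cons3} serves only to tie the two halves to the common $\ell_\g$, not to effect any cancellation of mixed terms.
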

\begin{proof}
The symplectic form $ \Omega({\Gcal_2}) $ can be  computed using the general formula (\ref{OmegaG}) by adding up the contributions of all  four vertices $ \t{v}$, $ \h{v}$, $ \t{q}$ and $ \h{q}$ of the graph $ \Gcal_2$.
  Following the general formula \eqref{OmegaG},  we claim that the contribution  of the vertex $\t{q}$  to  $ \Omega({\Gcal_2}) $   equals to $ \Omega_{\t{q}}=2\mathrm{d}\beta_{\t \g} \wedge \mathrm{d}\len_\gamma  $, the contribution of the vertex  $\h{q}$  equals to $ \Omega_{\h{q}} =2\mathrm{d}\beta_{\h \g} \wedge \mathrm{d}\len_\gamma$ and the contribution of the vertex $ q$ equals 0.

To prove this claim we observe that the four edges incident to $ \t{q} $ in counterclockwise order have the following jump matrices: $ J(e_1) =J_1= M_{\t{\g}}$, $ J(e_2)=J_2 = {C'_{\t\g}}$, $ J(e_3)=J_3 = \Lambda^{-1} $ and $ J(e_4)=J_4= {C'_{\t\g}}^{-1}$. 
According to equation (\ref{OmegaG})  the contribution of  $ \t q $ to $\Omega ({\Gcal_2}) $ reads:
\begin{align*}
    \Omega_{\t{q}}&=\frac{1}{2}\sum_{l=1}^{3} \mathrm{tr} \left( J_{[1...l]}^{-1} \d J_{[1...l]} \wedge J_{l}^{-1} \d J_{l} \right)\\
    &= \frac{1}{2}\mathrm{tr}\left( (J_1 J_2)^{-1} \d(J_1 J_2) \wedge  J_2^{-1} \d J_2 \right) + \frac{1}{2} \mathrm{tr}\left( (J_1 J_2 J_3)^{-1} \d (J_1 J_2 J_3) \wedge J_3^{-1} \d J_3 \right) \\
    &= \frac{1}{2} \mathrm{tr} \left( ( M_{\t{\g}}  {C'_{\t\g}})^{-1} \d ( M_{\t{\g}}  {C'_{\t\g}}) \wedge {C'_{\t\g}}^{-1}  \d {C'_{\t\g}} \right) +\frac{1}{2}\mathrm{tr} \left( {C'_{\t\g}}^{-1} \d  {C'_{\t\g}} \wedge \Lambda  \d (\Lambda^{-1}) \right) \\
    &= \frac{1}{2} \mathrm{tr}\left( M_{\t{\g}}^{-1} \d M_{\t{\g}} \wedge {C'_{\t\g}}^{-1} \d  {C'_{\t\g}} \right) + \frac{1}{2} \mathrm{tr}\left( \Lambda^{-1} \d \Lambda \wedge  {C'_{\t\g}}^{-1} \d {C'_{\t\g}} \right).
\end{align*} 

Using the identity  $M_{\t{\g}}^{-1} \d M_{\t{\g}} =  {C'_{\t\g}} \Lambda^{-1} {C'_{\t\g}}^{-1} \left(\d  {C'_{\t\g}}+  \d \Lambda + \d ( {C'_{\t\g}}^{-1}) \right) $ we see that the first term is equal to the second one, so the contribution $ \Omega_{\t{q}} $ simplifies to:
$$ 
\Omega_{\t{q}} =  \mathrm{tr}\left( \Lambda^{-1} \d \Lambda \wedge  {C'_{\t\g}}^{-1} \d   {C'_{\t\g}}\right)\;.$$ 
Using the definition (\ref{CCt}) of  $  {C'_{\t\g}}$ we get:
$$ 
\Omega_{\t{q}}=\mathrm{tr}\left( \Lambda^{-1} \d \Lambda \wedge  {C'_{\t\g}}^{-1}  \d {C'_{\t\g}} \right) = -2 \frac{\mathrm{d}\lambda_\gamma}{\lambda_\gamma} \wedge \frac{\mathrm{d} b_{\t \g}}{b_{\t \g}}=2 \mathrm{d} \beta_{\t \g} \wedge \mathrm{d}\len_\gamma \;.
$$
Similarly,
$$
 \Omega_{\h{q}}=-2\frac{\mathrm{d}\lambda_\gamma}{\lambda_\gamma} \wedge \frac{\mathrm{d}b_{\h \g}}{b_{\h \g}}= 2 \mathrm{d}\beta_{\h \g} \wedge \mathrm{d}\len_\gamma \; .
 $$
The four edges incident to $ q$ have jump matrices $ \bb^{-1}$, $ \Lambda$, $ \bb$ and $ \Lambda$. 
Since the matrix $ \bb$ is constant  the contribution of $ q$ to $ \Omega$ is zero as can be seen from (\ref{OmegaG}).

To compute  the contributions of the vertex $ \t{v}$ we can use the result of \cite{bertola2023extended} (or simply perform the computation) which, adapted to the current notation,  states that 
   \be
   \label{omegatv} 
   \Omega_{\t{v}}= \sum_{\substack{i,j=1 \\ i <j}}^{2(6\t{g}-3)} \mathrm{d} {\zeta}_{ \t h_i}  \wedge \mathrm{d} {\zeta}_{\t h_j} \;.
   \ee
 It is important to  clarify where the cyclic order of the triangulation edges at $\t v$ starts because, {by itself}, the expression \eqref{omegatv} is not invariant under cyclic permutations of the edges. 
   The edge $e_1$ in \eqref{omegatv} is the first edge of the triangulation $\t\Sigma$ (see Section \ref{CtCh}) that appears in counter-clockwise order at $\t v$ starting from the edge $[\t v, \t q]$, see Fig. \ref{Graph2}. In \cite{bertola2023extended}, Sec.6, it was explained how the {\it total} expression for $\Omega$ is independent of the ``ciliation'' (terminology of loc. cit.), namely, (in the present context) in which triangle of the triangulation $\t\Sigma$  we place the gluing zone. However, this invariance requires that the toric variable $b_{\t \g}$ undergoes a log-linear change, see discussion in \cite{bertola2023extended}  (where the case of the  general $SL(N,\C)$ character variety was considered; in that paper the   log-toric variables are denoted by $\rho$ and the eigenvalues by $\mu$). 
    Similar expression and considerations  hold for $\Omega_{\h v}$.
   \end{proof}

\begin{remark}\rm
 Let   $ {\Hcal}_{g,1}$ be the extended $SL(2,\C)$ character variety   of Riemann surfaces of genus $q $ with one boundary homotopic to $\g$ as described in \eqref{ext}. It is a symplectic manifold, as discussed  in  Section \ref{SecExtSymp}.  
 Then  Th.\ref{maintheo} can be reformulated in the language of   Hamiltonian reduction as follows.  Consider the symplectic manifold $  \Hcal_{\t g,1}\times \Hcal_{\h g,1}$ with the product symplectic structure. Let $\len_{\t\gamma}, \len_{\h\gamma}$ be the eigenvalues of $M_{\t\gamma}, M_{\h\gamma}$, respectively, on the two factors. Recall that the Darboux conjugate coordinates are the  log-toric variables $\beta_{\t\g}, \beta_{\h\g}$. Now we can interpret the space $\V_g^{(\gamma)}$ 
 equipped with Goldman symplectic form as the Hamiltonian reduction of $\Hcal_{\t g,1}\times \Hcal_{\h g,1}$ under the action of the hamiltonian $\mu(\len_{\t\g}, \len_{\h\g}):=\len_{\t\g}-\len_{\h\g}$, which generates the $\C$ action 
   \be
   \beta_{\t\g}\mapsto    \beta_{\t\g}+t, \ \ \    \beta_{\h\g}\mapsto    \beta_{\h\g}-t, \ \ \ t\in \C\;,
   \ee namely
   \be
   \V_g^{(\gamma)} = \mu^{-1}(\{0\}) \sslash \C \;.
   \ee
The Goldman symplectic form $ \Omega$ can be written as in Theorem \ref{maintheo} on the reduced space $  \mu^{-1}(0) \sslash \mathbb{C} $. \hfill $\triangle$. 
\end{remark}

\subsection{Log-canonical coordinates in case of  a non-separating contour}
The computation is essentially identical, with the only difference being that the number of half-edges incident at $\t v, \h v$ depends on the choice of triangulation and not just on the genus of the surface with a total number of edges constrained by the Euler characteristic to be $6g-6$, since we have two vertices of a triangulation of a  surface of genus $g-1$.

Again, the Goldman symplectic form has constant coefficients when expressed in terms of the logarithmic shear coordinates and additional coordinates  $\ell_\g$ and  $\beta _\g$, where $\beta_\gamma$ is defined in the same formula (\ref{betadef}) as in separating case.

Recall the triangulations $\Sigma$ defined in Section \ref{CtCh2}.
An analog of Thm.\ref{maintheo} in the non-separating case can be stated  as follows, with the proof proceeding in completely parallel fashion.

\begin{theo} \label{maintheo1}
Let $\zeta_{e}= \ln z_e$ for any edge $e$ and set  $\ell_\g = \ln \lambda_\g, \ \ \beta_\g = \ln b_\g$. Suppose that the logarithms satisfy the linear constraints 
\be \la{cons3}
 \len_\gamma =\sum_{  e\perp \tilde{v},\;e\in\Sigma} \mu_e(\t v) \zeta_{  e}=\sum_{   e\perp \hat{v},\;e\in\Sigma  }  \mu_e(\h v) \zeta_{e}\;\hskip0.7cm {\rm mod }\;\; \pi i \;.
\ee
Then Goldman  form  $\Omega$ on $ \V_g$ can be written as follows in terms of logarithmic variables, subject to  (\ref{cons3}):
\be
  \Omega=\Omega_{0}+ {\Omega_{\t v}}+{\Omega_{\h v} } 
   \la{WPint2}
  \ee
where 
$$
\Omega_0=\mathrm{d}\beta_\gamma \wedge \mathrm{d}\len_\gamma  \;,
$$
$$
{\Omega_{\t v}}= \sum_{\substack{i,j=1 \\ i<j} }^{n_{\tilde{v}}} 
 \mathrm{d}\zeta_{\t h_i} \wedge \mathrm{d} \zeta_{\t h_j}\;,\hskip0.7cm
{\Omega_{\h v}}=\sum_{\substack{k,l=1 \\ k<l} }^{ n_{\hat{v}}}  \mathrm{d}\zeta_{\h h_k} \wedge \mathrm{d}\zeta_{\h h_l}\;,
$$
$n_{\tilde{v}} $   and $n_{\hat{v}}$ are valencies of $\tilde{v}$ and $\hat{v}$, respectively;
 $\t h_j$ are the half edges incident at $\t v$ as described above \eqref{Mtg} and enumerated counterclockwise starting from the first edge of the triangulation $\t \Sigma$ after the edge $[\t v, \t q]$ (see Fig.\ref{Graph2}). Ditto for $\h h_k$'s. 

\vskip0.5cm

\end{theo}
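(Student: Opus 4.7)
The plan is to mirror the proof of Theorem~\ref{maintheo} almost verbatim, taking into account that in the non-separating case the triangulation $\Sigma$ is connected, that its two vertices $\t v,\h v$ both sit on the surface $\Ccal_{g-1}$, and that a given edge of $\Sigma$ may now have multiplicity $0,1$ or $2$ at each of these vertices. First, I would invoke the chain of equalities $\Omega = \Omega(\Gamma_0) = \Omega(\Gamma_1) = \Omega(\Gamma_2)$, which follows from the sequence of admissible moves indicated at the end of Section~\ref{CtCh2} together with the invariance of the canonical two-form under admissible moves.

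Next, I would decompose
$$ \Omega(\Gamma_2) = \Omega_{\t v} + \Omega_{\h v} + \Omega_{\t q} + \Omega_{\h q} + \Omega_q + \sum_k \Omega_{w_k}, $$
where the $w_k$ are the internal face-vertices of $\Gamma_2^{(g-1)}$. Each $\Omega_{w_k}$ vanishes because the three incident jump matrices are all equal to the constant matrix $A$. The plumbing subgraph $\Gamma_{pl}$ is identical to the one used in Theorem~\ref{maintheo}, so the computation performed there applies without change and delivers $\Omega_{\t q} = 2\,\mathrm{d}\beta_{\t\g}\wedge \mathrm{d}\ell_\gamma$, $\Omega_{\h q} = 2\,\mathrm{d}\beta_{\h\g}\wedge \mathrm{d}\ell_\gamma$, and $\Omega_q = 0$. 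Using the definition $\beta_\gamma = 2(\beta_{\t\g}+\beta_{\h\g})$ these combine into $\Omega_0 = \mathrm{d}\beta_\gamma \wedge \mathrm{d}\ell_\gamma$.

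It then remains to evaluate $\Omega_{\t v}$ and $\Omega_{\h v}$. At $\t v$, the incident jump matrices are the shear matrices $S_e$ coming from edges of $\Sigma$ (each appearing with multiplicity $\mu_e(\t v)$), the matrices $A^{\pm 1}$ coming from the internal face-vertices, and finally $M_{\t\g}$ on the half-edge $[\t v,\t q]$. The no-monodromy condition \eqref{JJJ} at $\t v$ forces $M_{\t\g}$ to equal the inverse of the ordered product of all other incident matrices; reading off its lower-triangular form and its eigenvalue $-\lambda_\gamma$ produces precisely the log-linear constraint \eqref{cons3} with the multiplicities $\mu_e(\t v)$. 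The contribution $\Omega_{\t v}$ is then computed from \eqref{OmegaG} by repeating the shear-coordinate derivation of \cite{bertola2023extended}, Sec.~7, which yields
$$ \Omega_{\t v} = \sum_{\substack{i,j=1 \\ i<j}}^{n_{\t v}} \mathrm{d}\zeta_{\t h_i}\wedge \mathrm{d}\zeta_{\t h_j}\;, $$
and the symmetric computation at $\h v$ gives $\Omega_{\h v}$. Summing all contributions yields the stated formula.

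The main obstacle I expect is the bookkeeping of multiplicities and of the cyclic order at $\t v$ and $\h v$. In contrast with the separating case, an edge of $\Sigma$ can now have both ends at $\t v$, both ends at $\h v$, or one end at each vertex, so the enumeration of half-edges in counterclockwise order (starting just after $[\t v,\t q]$, respectively $[\h v,\h q]$) and the derivation of \eqref{cons3} both require care; since the shear-coordinate formula of \cite{bertola2023extended} is not cyclic-symmetric on the nose, the correct choice of starting edge is crucial for producing the clean expression of $\Omega_{\t v}$ and $\Omega_{\h v}$. Once this bookkeeping is in place the argument is parallel to the separating case.
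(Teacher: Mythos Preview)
Your proposal is correct and follows exactly the approach the paper indicates: the paper does not give a separate proof of Theorem~\ref{maintheo1} but simply states that it ``proceed[s] in completely parallel fashion'' to Theorem~\ref{maintheo}, which is precisely what you do. Your identification of the multiplicity and cyclic-order bookkeeping as the only place requiring extra care is apt and matches the paper's preceding discussion in Section~\ref{CtCh2}.
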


\section{Generalization to multiple cutting contours}
A system of log-canonical coordinates generalizing the previous construction can be naturally associated to any system of $m$ closed nonintersecting geodesics $\gamma_1,\dots \gamma_m$ for any $m=1,\dots, 3g-3$.
These contours split $\Ccal$ into one or more Riemann surfaces with boundaries which we denote by $\Ccal^{(1)},\dots\Ccal^{(n)}$.
Denote the number of boundary components of $\Ccal^{(i)}$ by $k^{(i)}$ and the genus by $g^{(i)}$; we have $\sum_{i=1}^n k^{(i)}= 2m$. The boundary components of each $\Ccal^{(i)}$ are denoted $ \gamma_j^{(i)}$, $ j=1,..., k^{(i)}$, see Figure \ref{MSC}.

\begin{figure}[ht!]
    \centering
    \includegraphics{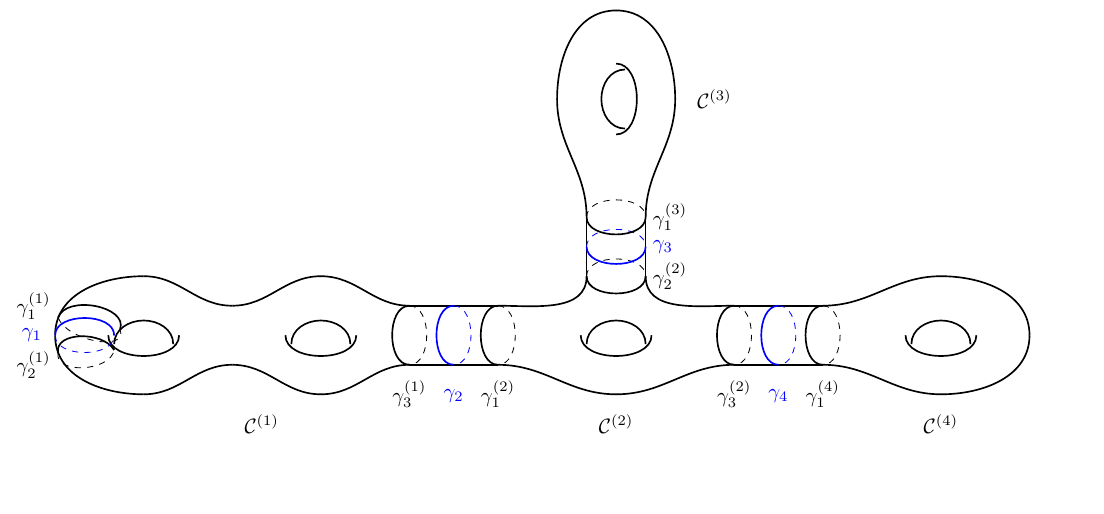}
    \caption{An example of multiple contours for $ m=4$ and $ g=5$.}
    \label{MSC}
\end{figure}
On each $\Ccal^{(i)}$ we choose points $v_a^{(i)}, \ a = 1,\dots, k^{(i)}$ near the corresponding boundary components. Let  $\Sigma^{(i)}$  be a triangulation of the surface-without-boundary obtained by capping the $k^{(i)}$ boundaries of $\Ccal^{(i)}$ with as many  disks, and with vertices at  points $v^{(i)}_a$'s. The triangulation $\Sigma^{(i)}$ has $6g^{(i)}-6+3 k^{(i)}$ edges.

We associate complex log-shear coordinates $\zeta_{e}$ to each    edge $e$ of $\Sigma^{(i)}$ (and $z_e = {\rm e}^{\zeta_e}$).
Adding internal vertices and edges for each triangle in $\Ccal^{(i)}$ we get a graph $ \Gamma^{(i)}$ on each $ \Ccal^{(i)}$.  The graph in $ \Ccal$ obtained by amalgamating all graphs $ \Gamma^{(i)}$, is denoted by $ \Gamma$.

The eigenvalues of $M_{\gamma_a^{(i)}}$ in terms of the shear coordinates are $ (-1)^{\sharp{\text {(incoming edges)}}}\big(\lambda_a^{(i)}\big)^{\pm 1} $ where 
\be
 \lambda_a^{(i)}:= \prod_{e \perp v^{(i)}_a \atop e \in E(\Sigma^{(i)})} z_e^{\mu_e({v_a^{(i)})}} , \ \ \ i = 1,\dots, n, \ \ a = 1,\dots, k^{(i)}, 
\ee 
and $\mu_e({v_a^{(i)})} \in \{1,2\}$ denotes the multiplicity of the edge $e$ at the vertex $v_a^{(i)} $, namely, it is  the number of endpoints of $e$ coinciding with the vertex $v_a^{(i)} $. 
Then the complex oriented length $\len_{\g_a^{(i)}}$ of $\gamma_a^{(i)}$  are related to $\zeta_e$'s: 
\bea \la{lengai}
\len_{\g_a^{(i)}}  := \sum_{e\in E(\Sigma^{(i)}),\; e\perp v_k^{(i)}}\mu_e({v_a^{(i)})}\, \zeta_e   \mod 2\pi i  \;, \ \ \ \ \ 
 i=1,\dots m , \ \ \ a = 1, \dots, k^{(i)}. 
\eea
The constraints encode the combinatorics of the gluing of the different components; if $\gamma^{(i)}_a$ and $\gamma^{(j)}_b$ belong to the same free homotopy class $[\gamma_k]$ ($k=1,\dots, m$)  in the total surface then we have to impose the constraint 
\be
 \label{constMC} 
 \len_{\g_a^{(i)}}  = \len_{\g_b^{(j)}}  = \len_{\g_k}.
\ee

The system of constraints (\ref{constMC}) is linear in terms of $\zeta_e$. 
The system of log-canonical coordinates contains ``complex lengths''  $\len_{\g_k}$ for $k=1,\dots,m$ and the shear-type variables $\zeta_e$ 
(there are linear constraints (\ref{constMC}) relating   $\len_{\g_k}$ to $\zeta_e$'s).

Define the space $\V_g^{{\boldsymbol \gamma}}$, where ${\boldsymbol \gamma}=(\gamma_1,\dots,\gamma_m)$ such that the monodromy matrices $M_{\gamma_j}$ 
corresponding to contours $\gamma_j$ are all diagonalizable. Then the following theorem holds:

\begin{theo} \label{MC}
The Goldman form $ \Omega $ on $\V_g^{{\boldsymbol \gamma}}$   can be represented as
\be
\Omega=\sum_{i=1}^n \Omega^{(i)} + \sum_{j=1}^m  \d\beta_{\g_j} \wedge \d\len_{\g_j} \;,
\la{O12}\ee
where
\be
\Omega^{(i)}=\sum_{v\in V(\Sigma^{(i)})}\sum_{\substack{e, \t{e} \perp v \\ e<\t{e}} } \d\zeta_e \wedge \d \zeta_{\t{e}} \;.
\la{Oi}\ee
 The form is restricted to the (linear) submanifold cut by the collection of all constraints \eqref{constMC}. 
\end{theo}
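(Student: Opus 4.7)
The plan is to prove Theorem \ref{MC} by a direct generalization of the arguments behind Theorems \ref{maintheo} and \ref{maintheo1}. The strategy has three steps: construct an admissible pair $(\Gamma,J)$ on $\Ccal$ by amalgamating, for each component $\Ccal^{(i)}$, its triangulation-based graph with a plumbing graph at each cutting contour $\gamma_j$; show via admissible moves that $\Omega(\Gamma)$ coincides with the Goldman form on $\V_g^{\boldsymbol{\gamma}}$; and finally compute $\Omega(\Gamma)$ by summing the vertex contributions in \eqref{OmegaG} and identifying them with the pieces in \eqref{O12}--\eqref{Oi}.

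First I would build the graph. On each $\Ccal^{(i)}$ I put the triangulation $\Sigma^{(i)}$ with vertices $v_a^{(i)}$ near the $k^{(i)}$ boundary components, assign the shear matrices $S_e$ from \eqref{defS_e} to each edge, and add an internal vertex in every triangle joined to its three corners by edges carrying the constant matrix $A$ from \eqref{defAm}, exactly as in Section \ref{CtCh}. For each contour $\gamma_j$ I insert a copy $\Gamma_{pl}^{(j)}$ of the plumbing graph of Fig.\ref{graphs}, attaching its two univalent endpoints to the two triangulation vertices $v_a^{(i)}, v_b^{(i')}$ flanking $\gamma_j$ (with $i=i'$ in the non-separating case). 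The jump matrices on $\Gamma_{pl}^{(j)}$ are $M_{\gamma_j^{\pm}}$ on the segments $[v_a^{(i)},\tilde q_j]$, $[\hat q_j,v_b^{(i')}]$ (computed from the surrounding triangulation exactly as in \eqref{Mtg}), diagonalizers $C'_{\gamma_j^\pm}$ on the loops at $\tilde q_j,\hat q_j$, the common diagonal $\Lambda_j$ on the two central edges, and $\bb$ on the middle loop. The relations \eqref{constMC} are precisely what is needed so that the no-monodromy condition \eqref{JJJ} holds at the central vertex $q_j$, while the conditions at the other plumbing vertices and at $v_a^{(i)}$ are checked exactly as in Section \ref{CtCh1}.

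Next I would show $\Omega(\Gamma)=\Omega(\Gamma_0)$ by performing, one contour at a time, the sequence of admissible moves used in Section \ref{Sec5.1} (for separating $\gamma_j$) or Section \ref{nonsep} (for non-separating $\gamma_j$): merging each internal triangle vertex into its adjacent $v_a^{(i)}$, zipping the triangulation edges to produce canonical generators of $\pi_1(\Ccal^{(i)})$, merging the three plumbing vertices into one, and finally zipping the loop edges into the canonical generators. Since admissible moves preserve the canonical form (\cite{bertola2023extended}), $\Omega(\Gamma)$ equals the Goldman form.

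Finally I would evaluate $\Omega(\Gamma)=\sum_v \Omega_v$ by classifying the vertices into four groups. The internal triangle vertices have all incident jump matrices equal to the constant $A$, so $\Omega_v=0$. The central plumbing vertices $q_j$ have jumps $\bb^{-1},\Lambda_j,\bb,\Lambda_j$, and since $\bb$ is constant the contribution vanishes, as was checked after \eqref{omegatv}. The two flanking plumbing vertices $\tilde q_j,\hat q_j$ each contribute $2\,\d\beta_{\gamma_j^\pm}\wedge \d\ell_{\gamma_j}$ by the verbatim computation carried out for $\Omega_{\t q}$ in the proof of Theorem \ref{maintheo}, and summing over both sides and setting $\beta_{\gamma_j}=2(\beta_{\gamma_j^+}+\beta_{\gamma_j^-})$ as in \eqref{betadef} yields the pair $\d\beta_{\gamma_j}\wedge \d\ell_{\gamma_j}$ in \eqref{O12}. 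Each triangulation vertex $v_a^{(i)}$ contributes $\sum_{e<\tilde e\perp v_a^{(i)}} \d\zeta_e\wedge \d\zeta_{\tilde e}$ by the shear-form computation of \cite{bertola2023extended} already invoked in Theorem \ref{maintheo}; summing over $a$ for fixed $i$ produces $\Omega^{(i)}$ in \eqref{Oi}. The main bookkeeping obstacle is to make sure that the ciliation at each $v_a^{(i)}$ (where in the cyclic order at $v_a^{(i)}$ the plumbing edge $[v_a^{(i)},\tilde q_j]$ is inserted) is chosen consistently, and that the corresponding log-linear reshuffling of the toric variables described in \cite{bertola2023extended} is absorbed into the definition of $\beta_{\gamma_j}$; because the contours $\gamma_j$ are disjoint, the different vertices $v_a^{(i)}$ carry at most one plumbing insertion each and these choices decouple, producing the clean sum \eqref{O12} restricted to \eqref{constMC}.
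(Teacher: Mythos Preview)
Your proposal is correct and follows exactly the approach the paper indicates: the paper's own proof consists of a single sentence stating that it is a ``direct computation of $\Omega$ as $\Omega(\Gamma)$, which is a straightforward generalization of the one given in the proof of Theorem~\ref{maintheo}.'' You have simply spelled out that generalization in detail---construction of the amalgamated admissible pair, reduction to $\Gamma_0$ via admissible moves, and vertex-by-vertex evaluation---which is precisely what the paper leaves to the reader.
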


\begin{proof}
    The proof is by direct computation of $ \Omega$ as $ \Omega(\Gamma)$, which is a straightforward generalization of the one given in the proof of Theorem \ref{maintheo}.\end{proof}

\section{Complete trinion decomposition}

Let us discuss in detail the case of complete trinion decomposition with an eye to future applications to the parametrization of Teichm\"uller spaces. This decomposition is obtained by splitting $ \Ccal $ along a system, $ S$, of $m=3g-3$ closed non-intersecting simple loops  $ \g_1, ..., \g_{3g-3}$ to obtain $ 2g-2 $ trinions  denoted by $ \Pcal^{(j)}$, each of which is conformally equivalent to a thrice-holed sphere.

Consider a single trinion $ \Pcal$: a triangulation is shown in Figure \ref{triangle}, left pane,  with edges ordered as follows (counterclockwise from the stem $[v_j, q_j]$):
 \begin{equation} \label{edgeordering}
     \begin{split}
        & e_2, e_3 \perp v_1, \quad e_3<e_2 \;, \\
        &  e_1, e_3 \perp v_2, \quad e_1<e_3 \;,\\
        & e_1, e_2 \perp v_3, \quad e_2<e_1 \;.
     \end{split}
 \end{equation}

\begin{figure}[!ht]
    \includegraphics[width=0.53\textwidth]{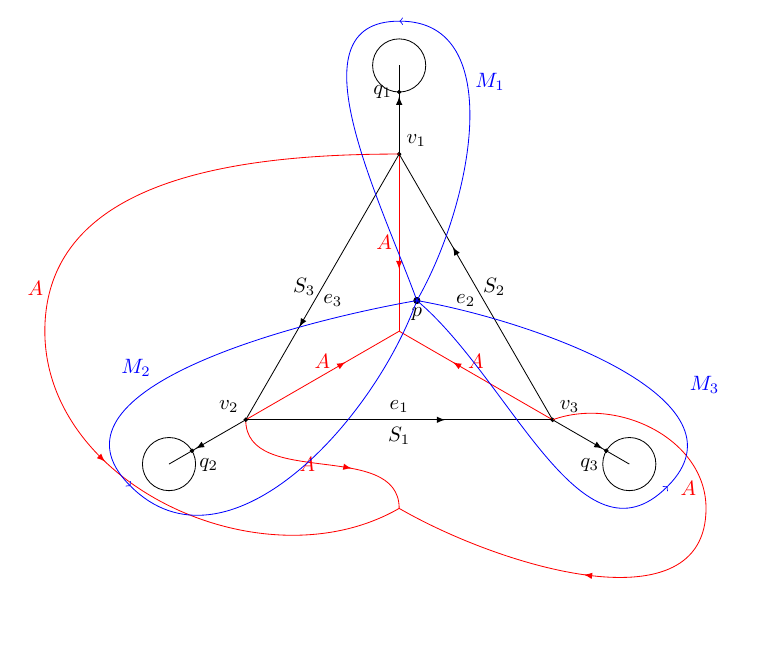}\includegraphics[width=0.4\textwidth]{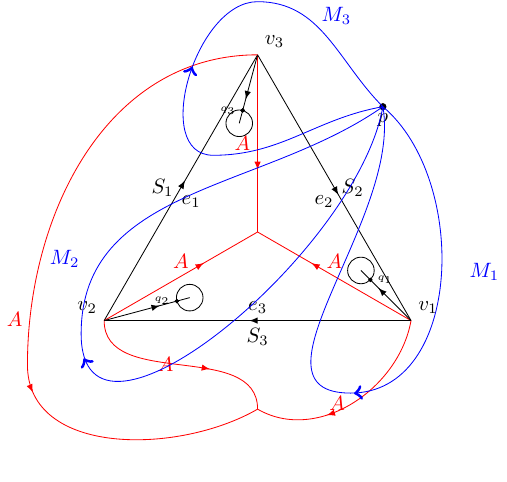}
    \caption{Triangulation and monodromies associated to a single trinion $ \Pcal$ but inducing opposite cyclic order of the boundaries.}
    \label{triangle}
\end{figure}

Taking the base point $ p$ inside the black triangle  (Fig.\ref{triangle}), the local $ SL(2, \mathbb{C})$ monodromy matrices around $ v_j$ are denoted by $ M_j$, with complex eigenvalues $ -\lambda_j$ and $ -1/\lambda_j$, $ j=1,2,3$.  Notice that in the previous computations we have denoted the eigenvalues of $M_j$ by $(\lambda_j, 1/\lambda_j)$.
The change of sign in the case of a trinion is done for convenience; this allows to preserve reality of the variables $z_j$ in the $SL(2,\R)$ case which are going to treat in the 
subsequent work.
The shear-type coordinates $z_{e_1}$, $ z_{e_2}$ and $ z_{e_3}$ can be expressed in terms of $ \lambda_1$, $ \lambda_2$ and $ \lambda_3$ as:
\begin{equation}\label{trincons}
    z_{e_1}=
   {\sqrt{\frac {\l_3\l_2}{\l_1}}}
    \;, \hskip 0.7cm 
    z_{e_2} = 
    {\sqrt{\frac{\l_1\l_3}{\l_2}} }
    \;, \hskip 0.7cm 
    z_{e_3} = 
    {\sqrt{\frac{\l_1\l_2}{\l_3}}}
    \;.
\end{equation}
Then a representation $ \rho: \pi_1(\Pcal, p) \rightarrow SL(2,\mathbb{C}) $ is explicitly given by the following three monodromies: 

\bea
M_{1} = 
\left[\begin{array}{cc}
-\lambda_{1}  & 0 
\\
 \frac{\lambda_{1} \lambda_{2} +\lambda_{3}}{\lambda_{1} \lambda_{3}} & -\frac{1}{\lambda_{1}} 
\end{array}\right]
,\qquad
M_{2} = 
\left[\begin{array}{cc}
\frac{\lambda_{3}}{\lambda_{1}} & \frac{-\lambda_{2} \lambda_{3} -\lambda_{1}}{\lambda_{1} \lambda_{2}} 
\\
 \frac{\lambda_{1} \lambda_{2} +\lambda_{3}}{\lambda_{1}} & \frac{-\lambda_{1} \lambda_{2}^{2}-\lambda_{2} \lambda_{3} -\lambda_{1}}{\lambda_{1} \lambda_{2}} 
\end{array}\right]
,\qquad
M_{3} = 
\left[\begin{array}{cc}
-\frac{1}{\lambda_{3}} & \frac{-\lambda_{2} \lambda_{3} -\lambda_{1}}{\lambda_{2}} 
\\
 0 & -\lambda_{3}  
\end{array}\right]
\ .
\label{Monotrin}
\eea

These monodromies satisfy 
 $$ M_1 M_2 M_3 = \mathbb{I}_2\;, \hspace{0.7cm} \mathrm{Tr}(M_j) = -\lambda_j - \frac {1} {\lambda_j}\;, \hspace{0.7cm} j=1,2,3\;. $$

The toric variables are defined relative to the lower triangular diagonalizing matrices of the ``local'' monodromies, see \eqref{Mtg}, namely, the jump matrices on the edges $[v_j,q_j]$ (indices taken cyclically):
\bea
M_j^{(loc)}  = \big(A S_{e_{j+2}} A S_{e_{j+1}}\big)^{-1} = \left[\begin{array}{cc}
-\lambda_{j}  & 0 
\\
 \frac{\lambda_{j} \lambda_{j+1} +\lambda_{j+2}}{\lambda_{j+2} \lambda_{j}} & -\frac{1}{\lambda_{j}} 
\end{array}\right],
\eea
so that the diagonalizing matrices can be normalized as follows:
\bea
M_j^{(loc)} = C_j \Lambda C_j^{-1} ,\ \ \ \ C_j= \left[\begin{array}{cc}
1 & 0 
\\
 \frac{-\lambda_{j} \lambda_{j+1} -\lambda_{j+3}}{\left(\lambda_{j}^{2}-1\right) \lambda_{j+3}} & 1 
\end{array}\right]\;.
\eea
Consider now Fig. \ref{triangle}, right pane: this is topologically the same graph  on the Riemann sphere but now the cyclic order of the labelling of the three boundaries is the opposite so that the holes $1,3$ on the left pane are now labelled $3,1$, respectively (and the shear coordinates are  similarly  re-labelled). Marking the shear coordinates of the right trinion with a tilde, we have $z_1 = \t z_3,\  z_2 =\t z_2, \ z_3 = \t z_1$ and similarly $\lambda_1 = \t \lambda_3, \ \lambda_2=\t \lambda_2, \ \lambda_3 = \t \lambda_1$. The local monodromies and diagonalizing matrices for the left trinion are related to the ones on the right trinion by 
\be
\t M_3^{(loc)} = \big(A S_{e_3}\big)^{-1}M_1^{(loc)}A S_{e_3},\ \ 
\ee
so that the diagonalizing matrix is $ C_1 \big(A S_{e_3}\big)^{-1}$, 
and similar for $\t M_2^{(loc)},\t M_1^{(loc)}$. Computing directly  gives
\be
 C_1 \big(A S_{e_3}\big)^{-1} = \left[\begin{array}{cc}
\sqrt{\frac {\t \lambda_{3} \t \lambda_{2}}{\t \lambda_{1}}}& 0 
\\
 -\frac{\sqrt{\frac{\t \lambda_{1} \t \lambda_{3}}{\t \lambda'_{2}}}\, \left(\t \lambda_{1} \t \lambda_{3} +\t \lambda_{2} \right)}{\left({\t \lambda_{3}}^{2}-1\right) \t \lambda_{1}} &\sqrt{\frac{\t \lambda_{1}}{\t \lambda_{3} \t \lambda_{2}}} 
\end{array}\right]
=
\underbrace{\left[\begin{array}{cc}
1 & 0 
\\
 -\frac{ \left(\lambda_{3} \lambda_{1} +\lambda_{2} \right)}{\left(\lambda_{1}^{2}-1\right) \lambda_{2}} &1
\end{array}\right]}_{\t C_3}
 \left(
 \frac{\lambda_{3}} {\lambda_{1} \lambda_{2}}\right)^{\frac {-\sigma_3}2}\;.
\ee
We thus see that the diagonalizing matrix $C_1 \big(A S_{e_3}\big)^{-1},$ relative to the normalized matrix $\t C_3$, is shifted by the right toric action. Therefore the ``new'' toric variables $\t b_j$ are in the following relation with the ``old'' ones, $b_j$ (repeating a computation similar to the one above  for the other two matrices as well):
\be
\label{toricshift}
\t b_3 = \sqrt{ \frac{\lambda_{3}} {\lambda_{1} \lambda_{2}}} b_1\ ,\ \ \ 
\t b_2 =  \sqrt{ \frac{\lambda_{1}} {\lambda_{2} \lambda_{3}}} b_2\ ,\ \ \ 
\t b_1 =  \sqrt{ \frac{\lambda_{2}} {\lambda_{3} \lambda_{1}}} b_3\ .
\ee
For the log-toric variables the shift reads
\be
\label{logtoricshift}
\t\beta_3 = \beta_1 + \frac 1 2\left(\len_3 - \len _1 - \len _2\right), 
\ \ \ 
\t\beta_2 = \beta_2 + \frac 1 2\left(\len_1 - \len _2 - \len _3\right), 
\ \ \ 
\t\beta_1 = \beta_3 + \frac 1 2\left(\len_2 - \len _1 - \len _3\right).
\ee
For later reference we point out that (indices taken cyclically):
\be
\label{savinggrace}
\sum_{j=1}^3 \d \t \beta_j\wedge \d \t\len _j =   \sum_{a=1}^3 \d \len_a \wedge \d \len_{a+1}  + \sum_{j=1}^3 \d  \beta_j\wedge \d \len _j.
\ee

The exact expression of the Goldman form $ \Omega$  depends on the combinatorics of the  trinion decomposition which is encoded  in the trivalent ``trinion graph'' corresponding to a complete trinion decomposition.
\begin{dref}\rm
\label{deftringraph}
    The {\it ribbon trinion graph} $\mathfrak T$  of a complete trinion decomposition is the tri-valent graph whose vertices correspond to  individual trinions and the three edges meeting at the same 
    vertex are ordered cyclically.
     The edges of $\mathfrak  T$ connect two vertices if the corresponding trinions share a boundary component. To avoid confusion with the edges of other graphs, the edges of the trinion graph will be denoted $\frak e \in E( \mathfrak T)$. Accordingly, the loops $\gamma_j$ can be indexed by  the edges of the trinion graph: $\gamma_j\sim\gamma_{\frak e}$ for $ \ \frak e\in E(\frak T)$. 
\end{dref}

Let the boundary components of $ \Pcal^{(j)}$ be $ \g_1^{(j)}$, $ \g_2^{(j)}$ and $ \g_3^{(j)}$, ordered according to the cyclic ordering of the  edges of the graph $\mathfrak T$ meeting at $ \Pcal^{(j)}$., 
see Fig. \ref{triangle}.  The choice of this cyclic order is equivalent to the  choice of triangulation of the trinion as follows: the triangulation shown in Fig. \ref{pants} determines two regions  and one of the two is a triangle, while the other contains the three boundary components. Then the labelling of the components is cyclic relative to the triangle, as shown ibidem.

\begin{theo} \label{TheoTD}
    Let $ {\mathfrak T} $ be the trinion graph of a complete trinion decomposition. Then the Goldman form on $ \V_g^{{\boldsymbol \gamma}}$ can be expressed as:
  \begin{equation}\label{Omtrin}
    \begin{split}
       \Omega=   \sum_{\mathcal T^{(j)}\in V(\frak T) } (\d \len^{(j)}_1\wedge\d \len^{(j)} _2 + \d \len^{(j)} _2\wedge\d  \len^{(j)}_3+ \d \len^{(j)}_3\wedge \d \len^{(j)}_1)
        + \sum_{\frak e \in E(\frak T)} \mathrm{d} \beta_{\frak e}  \wedge \mathrm{d} \len_{\frak e} \;,
    \end{split}   
    \end{equation}
    with the  understanding that the form is restricted according to the following rule:
 For each edge $\frak e=[j,k]$ that connects the boundary $a$ of  $\mathcal T^{(j)}$ and the boundary $b$ of $\mathcal T^{(k)}$  ($a, b\in \{1,2,3\}$) we impose the constraint $\d \ell_{a}^{(j)} =  \d \ell_{\mathfrak e}$ and $\d \ell_b^{(k)} = \d \ell_{\mathfrak e}$.

Here the $3g-3$ variables $ \len_{\frak e} $  are the ``complex oriented lengths'' of the edges of the trinion graph, and $ \beta_{\frak e}$'s are the toric  variables, see Figure \ref{pants}.
\end{theo}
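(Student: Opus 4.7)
The plan is to derive \eqref{Omtrin} by specializing Theorem \ref{MC} to the maximal case $m=3g-3$ and then carrying out the explicit computation of the intra-trinion contribution $\Omega^{(j)}$ in terms of the three boundary lengths. First, I would apply Theorem \ref{MC} directly: since the $3g-3$ loops $\gamma_1,\dots,\gamma_{3g-3}$ cut $\Ccal$ into $2g-2$ trinions, the decomposition gives
\[
\Omega = \sum_{j=1}^{2g-2} \Omega^{(j)} + \sum_{{\frak e} \in E(\frak T)} d\beta_{\frak e}\wedge d\ell_{\frak e},
\]
and the second sum already has the claimed form. What remains is to evaluate $\Omega^{(j)}$, which by \eqref{Oi} is a sum over the three vertices of a chosen triangulation $\Sigma^{(j)}$ of the capped trinion.

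Next, I would reduce to the single-trinion computation. For one trinion $\Pcal$ with triangulation as in Figure \ref{triangle}, there are three vertices $v_1,v_2,v_3$, each of valence two, with cyclic ordering \eqref{edgeordering}. Thus
\[
\Omega^{(\Pcal)} = d\zeta_{e_3}\wedge d\zeta_{e_2} + d\zeta_{e_1}\wedge d\zeta_{e_3} + d\zeta_{e_2}\wedge d\zeta_{e_1},
\]
and by \eqref{trincons} the $\zeta_{e_a}$'s are linear combinations of the three boundary lengths $\ell_1,\ell_2,\ell_3$ with coefficients $\pm\tfrac12$. Substituting and expanding is a short computation: each pair $d\zeta\wedge d\zeta$ produces terms of the form $\frac12(\pm d\ell_a\wedge d\ell_b)$, and the three contributions combine (with the cross-terms of the same length cancelling) to give exactly $d\ell_1\wedge d\ell_2+d\ell_2\wedge d\ell_3+d\ell_3\wedge d\ell_1$, where the labelling is the cyclic one induced by the chosen triangulation. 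Summing over all trinions in the decomposition yields the first sum in \eqref{Omtrin}.

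Finally, I would handle the matching of the gluing constraints. For each edge ${\frak e}=[j,k]$ of $\frak T$, the loop $\gamma_{\frak e}$ is a common boundary of $\Pcal^{(j)}$ and $\Pcal^{(k)}$, and the eigenvalue identification from Proposition \ref{fact} (applied to successive gluings) forces $\ell_{a}^{(j)} = \ell_b^{(k)} = \ell_{\frak e}$. This is the restriction stated in the theorem, and it is the only relation among the $\ell$'s; it is inherited from the constraints \eqref{constMC} in Theorem \ref{MC}.

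The main subtlety, and the one step I would be most careful about, is consistency of the cyclic labelling of boundaries on each trinion with the ribbon structure of $\frak T$. Switching triangulations (or equivalently, reversing cyclic order as in Figure \ref{triangle}, right pane) changes both $\Omega^{(\Pcal)}$ and the toric piece. The identity \eqref{savinggrace}, combined with the shift \eqref{logtoricshift} of the toric variables, shows that the sum of the two contributions is invariant under this change of convention. This is what guarantees that \eqref{Omtrin} is a well-defined expression attached to the ribbon trinion graph $\frak T$ rather than a particular choice of triangulation on each trinion, and it is the place where care with orientations and signs is essential.
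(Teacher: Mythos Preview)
Your proposal is correct and follows essentially the same route as the paper: compute the per-trinion contribution of the canonical two-form, rewrite it in terms of the three boundary lengths, and then sum over trinions with the gluing constraints \eqref{constMC} and the amalgamated toric variables \eqref{betadef2}. The only cosmetic difference is that you invoke Theorem~\ref{MC} and then substitute \eqref{trincons} to pass from the $\zeta_{e_a}$'s to the $\ell_a$'s, whereas the paper restates the single-trinion contribution directly from the explicit monodromies \eqref{Monotrin} via \eqref{OmegaSt}; these are equivalent computations. One small caution: when you carry out the substitution step, check the overall sign carefully---the paper's intermediate formula \eqref{omPj} reads $\sum_a \d\ell_{a+1}\wedge \d\ell_a$, which is the negative of the cyclic sum you wrote, and the discrepancy is resolved only by the ordering convention at the very end of the proof.
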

\begin{proof}
    Let $ \{ \lambda_a^{(j)}, b_a^{(j)} \}_{a=1,2,3} $ be the eigenvalues and toric variables associated with the $ j-th$ trinion $ \Pcal^{(j)}$. Their logarithmic counterparts are $ \{ \len_a^{(j)}, \beta_a^{(j)} \}_{a=1,2,3} $.  
        An explicit computation using  (\ref{OmegaSt}) with  the monodromies (\ref{Monotrin}) yields the following contribution of an individual trinion $ \Pcal^{(j)}$ to the symplectic form $ \Omega $. This contribution is  coming from the  vertices $v_1, v_2, v_3, q_1,q_2,q_3$ shown in Fig. \ref{triangle}:
    \begin{equation} \label{omPj}
        \Omega_{\Pcal^{(j)}}=\sum_{a=1}^{3} \left( \mathrm{d} \len_{a+1}^{(j)} \wedge \mathrm{d} \len_a^{(j)} +  2 \mathrm{d} \beta_a^{(j)} \wedge \mathrm{d} \len_a^{(j)} \right)\;,
    \end{equation}
    where the summation index $ a$ is  taken modulo $3$.
    
    The form \eqref{omPj} must be summed over all trinions and then subjected to the constraints that we now describe, encoding the combinatorics of the trinion graph. 
        For each edge $\mathfrak e=[j,k]\in E(\mathfrak T)$ connecting the boundary labelled $a$ of the trinion $\mathcal T^{(j)}$ to the one labelled $b$ of trinion $\mathcal T^{(k)}$,  as explained in the statement of the theorem, the  gluing constraints require :
    \begin{equation}\label{consPj}
        \len_a^{(j)}=\len_b^{(k)}= \len_{e} \mod 2 \pi i\;, \hskip0.7cm 
     \end{equation}
    
    The corresponding toric variables are defined according to (\ref{betadef}):
    
\bea
    \beta_{\mathfrak e}= 2 \left( \beta_a^{(j)}+\beta_b^{(k)} \right) \;.
\la{betadef2}
 \eea

    For a loop edge $ \frak e= [j,j]$ incident to a vertex $\mathcal T^{(j)}$ at the boundaries $a, a+1$ ($a=1,2,3$),  the constraints become:
    $$ 
    \len_a^{(j)}= \len_{a+1}^{(j)}= \len_{\mathfrak e} \;, \hskip0.7cm  
    \beta_{\mathfrak e }=2 \left( \beta_a^{(j)}+\beta_{a+1}^{(j)} \right) \;.
    $$
 
     Summing over all vertices $ v$ of the trinion graph and using the ordering (\ref{edgeordering}), we obtain
    \begin{equation*}
        \Omega= \sum_{j \in V(\mathfrak T)} \sum_{\substack{\mathfrak e, \t{\mathfrak e} \perp j \\ \t{\mathfrak e} < {\mathfrak e}}} \mathrm{d} \len_{\mathfrak e} \wedge  \mathrm{d} \len_{\t{\mathfrak e}} + \sum_{\mathfrak e \in E(\mathfrak  T)} \mathrm{d} \beta_{\mathfrak e}  \wedge \mathrm{d} \len_{\mathfrak e}\;.
    \end{equation*}
   \end{proof}

\section{Conclusion and future directions}

We mention two main directions towards which to extend the present results.

The first one regards application to the moduli space of Riemann surfaces $M_g$. This requires the restriction of the present construction to the
Fuchsian component of the real slice of the character variety $V_g$; such a restriction requires the characterization of the Fuchsian component given by Goldman \cite{gold1988}.
For complete trinion decomposition characterized by a chosen trinion graph $ \Gamma_{trin}$, the relationship to Fenchel-Nielsen coordinates on $ \Mcal_g$ will be established in \cite{LogCharacterPaper2}. These new systems of log-canonical coordinates as well as generating functions may find several potential applications. 

The second is the extension to the $SL(N,C)$ character varieties of compact Riemann surfaces of genus $g$ and to their real slices (in particular, higher Teichmueller spaces) for any $N\geq 3$.
Such extension would also be based on log-canonical  coordinate systems on extended character varieties of punctured Riemann surfaces \cite{bertola2023extended} which in turn uses the
higher rank Fock-Goncharov coordinates \cite{fock2006moduli}.
The jump matrices on edges $S_e$ then become anti-diagonal; each such matrix contains $N-1$ coordinates while the matrices $A$ depend on  $(n-1)(n-2)/2$ additional coordinates (one coordinate for $SL(3)$ case, three for $SL(4)$, etc. ) \cite{bertola2023extended}. The logarithms of the eigenvalues of the monodromy matrices around the separating contours and the logarithms of the ratios of the corresponding toric variables form an additional set of $N-1$ conjugated pairs of coordinates for the Goldman symplectic form.
The technical details for an arbitrary $SL(N)$ case are very similar to \cite{bertola2023extended}: the full analysis is deferred to a future publication.
\appendix
\section{Examples}\renewcommand{\theequation}{\Alph{section}.\arabic{equation}}

\la{examplesapp}
In this appendix we compute explicitly, by the way of illustration of our general construction,   the forms of Theorem \ref{maintheo} and Theorem \ref{MC} on $ \Mcal_{2}$ (with one, two and three contours) and on $ \Mcal_{3}$  (with one, two, three and four contours).
\subsection{$ \V_{2}$}\subsubsection{One separating contour}
For $\V_{2}$ the idea is to select a separating contour $ \g $ which cuts a double-torus into two one-holed tori $ \t{\Ccal}$ and $ \h{\Ccal}$, endowed with graphs $ \t{\Gamma}$ and $ \h{\Gamma}$ as shown in Figure \ref{fig:201c} (the internal edges and the contour $ \g $ are not represented), whose associated shear type coordinates are $\{ \t \zeta_{e_1}, \t \zeta_{e_2}, \t \zeta_{e_3} \} $ and $ \{ \h \zeta_{e_1}, \h \zeta_{e_2}, \h \zeta_{e_3} \} $, respectively. \\
\begin{figure}[ht]
\centering
\includegraphics{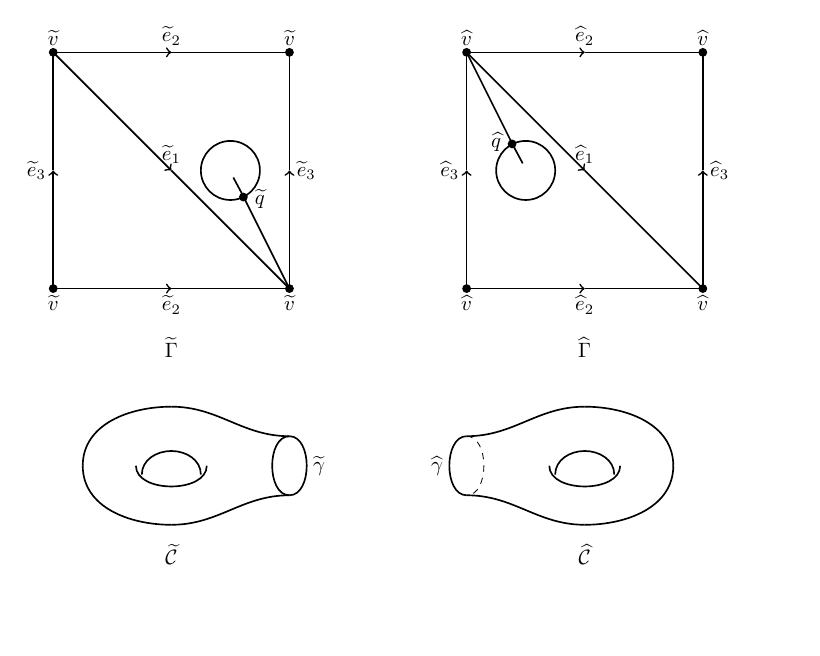}
\caption{Single contour splitting for $ \V_{2}$ and associated graphs.}
  \label{fig:201c}
\end{figure}

\begin{prop}
    In log-canonical coordinates $\{  \zeta_{\t e_2},  \zeta_{\t e_3},  \zeta_{\h e_2},  \zeta_{ \h e_3}, \len_{\g}, \beta_{\g} \}$ the Goldman form on $\V_{2} $ can be written :
    \begin{equation}\label{M201C}
        \omega_{G}=  2\mathrm{d}  \zeta_{\t e_2} \wedge \mathrm{d}  \zeta_{\t e_3} +  2 \mathrm{d}  \zeta_{\h e_2}  \wedge \mathrm{d}  \zeta_{\h e_3}+ \mathrm{d} \len_{\g} \wedge \left( \mathrm{d}  \zeta_{\t e_2} +  \mathrm{d}  \zeta_{\t e_3} \right)  + \mathrm{d} \len_{\g} \wedge \left( \mathrm{d}  \zeta_{\h e_2}   + \mathrm{d}  \zeta_{\h e_3} \right) +  \mathrm{d} \beta_{\g} \wedge \mathrm{d}\len_{\g}\;.
    \end{equation}
\end{prop}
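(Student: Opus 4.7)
The proof will be a direct specialization of Theorem \ref{maintheo} to the genus-$2$ case with $\tilde g = \hat g =1$ combined with elimination of one shear variable on each side using the linear constraints \eqref{zl}--\eqref{yl}. The plan consists of three steps: identify the cyclic order of half-edges at $\tilde v$ and $\hat v$; evaluate the antisymmetric double sums $\Omega_{\tilde v}$, $\Omega_{\hat v}$ combinatorially; substitute the shear-length constraint to eliminate $\zeta_{\tilde e_1}$ and $\zeta_{\hat e_1}$.

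Concretely, since $\tilde g = 1$, the triangulation $\tilde\Sigma$ of the capped one-holed torus has $6\tilde g-3=3$ edges, so $\tilde v$ has valence $12\tilde g-6=6$. Reading Figure \ref{fig:201c} counterclockwise from the edge $[\tilde v,\tilde q]$, the cyclic list of half-edges at $\tilde v$ is $(\tilde e_1,\tilde e_2,\tilde e_3,\tilde e_1,\tilde e_2,\tilde e_3)$ (each edge of $\tilde\Sigma$ has both endpoints at $\tilde v$, so it appears twice, at diametrically opposite positions). Plugging this sequence into the formula
$$
\Omega_{\tilde v}=\sum_{1\le i<j\le 6}\mathrm{d}\zeta_{\tilde h_i}\wedge\mathrm{d}\zeta_{\tilde h_j},
$$
the six ``diagonal'' contributions (same edge at both half-edges) vanish, and the remaining twelve terms collect, after straightforward sign bookkeeping, into the totally symmetric expression
$$
\Omega_{\tilde v} = 2\bigl(\mathrm{d}\zeta_{\tilde e_1}\wedge\mathrm{d}\zeta_{\tilde e_2} + \mathrm{d}\zeta_{\tilde e_1}\wedge\mathrm{d}\zeta_{\tilde e_3} + \mathrm{d}\zeta_{\tilde e_2}\wedge\mathrm{d}\zeta_{\tilde e_3}\bigr).
$$
An identical computation applies on the hatted side.

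To finish, invoke the constraint \eqref{zl}, which in this case reads $\ell_\gamma = 2(\zeta_{\tilde e_1}+\zeta_{\tilde e_2}+\zeta_{\tilde e_3})$, to write $\mathrm{d}\zeta_{\tilde e_1} = \tfrac{1}{2}\mathrm{d}\ell_\gamma - \mathrm{d}\zeta_{\tilde e_2}-\mathrm{d}\zeta_{\tilde e_3}$, and substitute into $\Omega_{\tilde v}$. A short calculation yields
$$
\Omega_{\tilde v}= 2\,\mathrm{d}\zeta_{\tilde e_2}\wedge\mathrm{d}\zeta_{\tilde e_3}+ \mathrm{d}\ell_\gamma\wedge\bigl(\mathrm{d}\zeta_{\tilde e_2}+\mathrm{d}\zeta_{\tilde e_3}\bigr),
$$
and symmetrically for $\Omega_{\hat v}$. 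Adding $\Omega_0 = \mathrm{d}\beta_\gamma\wedge \mathrm{d}\ell_\gamma$ from Theorem \ref{maintheo} produces exactly \eqref{M201C}.

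The only non-routine step is identifying the cyclic order of half-edges at $\tilde v$ (and verifying that it begins at the correct position, namely the first edge of $\tilde\Sigma$ met counterclockwise after $[\tilde v,\tilde q]$); once this is pinned down from Figure \ref{fig:201c}, everything else is an elementary algebraic reorganization of the antisymmetric sum. The possible ambiguity in labelling $\tilde e_1$ (i.e., which edge is eliminated by the constraint) does not affect the proposition since the pre-elimination expression is symmetric in $\tilde e_1,\tilde e_2,\tilde e_3$.
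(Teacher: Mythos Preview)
Your proposal is correct and follows essentially the same approach as the paper's own proof: both apply Theorem \ref{maintheo} (equivalently \eqref{WPint2}) to each side to obtain $\Omega_{\t v}=2(\d\zeta_{\t e_1}\wedge\d\zeta_{\t e_2}+\d\zeta_{\t e_1}\wedge\d\zeta_{\t e_3}+\d\zeta_{\t e_2}\wedge\d\zeta_{\t e_3})$ (and the hatted analogue), then eliminate $\zeta_{\t e_1},\zeta_{\h e_1}$ via the constraint $\ell_\gamma=2\sum_j\zeta_{\t e_j}=2\sum_j\zeta_{\h e_j}$. The only cosmetic difference is that the paper records the toric contributions as $2\d\beta_{\t\g}\wedge\d\ell_{\t\g}+2\d\beta_{\h\g}\wedge\d\ell_{\h\g}$ before combining them into $\d\beta_\gamma\wedge\d\ell_\gamma$, whereas you invoke $\Omega_0$ directly; your explicit enumeration of the six half-edges and verification of the coefficient $2$ is a welcome clarification that the paper leaves implicit.
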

\begin{proof}
    Using equation (\ref{WPint2}) for the vertex $ \t v$ of $ \t{\Gamma}$ in Figure \ref{fig:201c}, together with the twist-length type contribution $  2 \mathrm{d}  \beta_{\t \g} \wedge \mathrm{d} \len_{\t \g}$ coming from $ \t q$, without any gluing constraint, we get:
    \begin{equation*}
        \Omega(\t \Gamma)=2 \mathrm{d}  \zeta_{\t e_1} \wedge \mathrm{d}  \zeta_{\t e_2}+2 \mathrm{d}  \zeta_{\t e_1} \wedge \mathrm{d}  \zeta_{\t e_3}+2 \mathrm{d}  \zeta_{\t e_2} \wedge \mathrm{d}  \zeta_{\t e_3}+ 2
          \mathrm{d} \beta_{\t \g} \wedge \mathrm{d}  \len_{\t \g}\;.
    \end{equation*}
    Similarly for the vertex $ \h v$ of $ \h{\Gamma}$, together with the twist-length type contribution $  2 \mathrm{d} \beta_{\h \g} \wedge \mathrm{d} \len_{\h \g}$ coming from $ \h q$, we obtain:
    \begin{equation*}
        \Omega(\h \Gamma)= 2 \mathrm{d}  \zeta_{\h e_1} \wedge \mathrm{d}  \zeta_{\h e_2}+2  \mathrm{d}  \zeta_{\h e_1} \wedge \mathrm{d}  \zeta_{\h e_3}+2 \mathrm{d}  \zeta_{\h e_2} \wedge \mathrm{d}  \zeta_{\h e_3} + 2 
        \mathrm{d} \beta_{\h \g} \wedge \mathrm{d} \len_{\h \g}\;.
    \end{equation*}

Eliminating $\zeta_{\t e_1}, \zeta_{\h e_1}$ from the constraint equations $\ell_\gamma =2 \sum_{j=1}^3\zeta_{\t e_j} = 2\sum_{j=1}^3\zeta_{\h e_j} $, we obtain the final expression \eqref{M201C}.
 \end{proof}
\subsubsection{One separating and one non-separating contour}

Here one of the  contour, $ \gamma_2$, is non-separating. The resulting pieces $\t \Ccal$ and $\h \Ccal$ by cutting along $ \g_1$ and $ \g_2$ are a one-holed torus and a three-holed sphere (i.e. a trinion) respectively, see Figure \ref{fig:202c}. We can associate to $ \t \Ccal$, endowed with $\t \Gamma$, shear-type coordinates $ \{\t \zeta_{e_1}, \t \zeta_{e_2}, \t \zeta_{e_3} \}$ whereas $\h \Ccal$, with $\h  \Gamma$, comes with shear-type coordinates $ \{\h \zeta_{e_1}, \h \zeta_{e_2}, \h \zeta_{e_3} \}$, but all of them can be replaced by length-twist coordinates $ \{ \len_{\g_1}, \len_{\g_2}, \beta_{\g_1}, \beta_{\g_2} \}$ due to the constraints. 
\begin{prop}\label{M202C}
    In log-canonical coordinates $\{ \zeta_{\t e_2},  \zeta_{\t e_3}, \len_{\g_1}, \len_{\g_2}, \beta_{\g_1}, \beta_{\g_2} \}$ the Goldman  form on $\V_{2} $ can be written :
    $$ \omega_{G} =  2 \mathrm{d} \zeta_{\t e_2} \wedge \mathrm{d}  \zeta_{\t e_3} + \mathrm{d} \len_{\g_1}  \wedge \left(\mathrm{d}  \zeta_{\t e_2} + \mathrm{d} \zeta_{\t e_3}  \right)  +  \mathrm{d} \beta_{\g_1} \wedge \mathrm{d} \len_{\g_1}  + \mathrm{d} \beta_{\g_2} \wedge  \mathrm{d} \len_{\g_2} \;.$$
\end{prop}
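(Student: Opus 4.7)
The plan is to apply Theorem \ref{MC} to the two-piece dissection of Figure \ref{fig:202c}, in which $n=2$: one component is the one-holed torus $\tilde\Ccal$ bounded by $\gamma_1$, the other is the three-holed sphere $\hat\Ccal$ whose three boundaries glue to $\gamma_1$ once and to $\gamma_2$ twice (the latter since $\gamma_2$ is non-separating). Theorem \ref{MC} immediately gives the a priori expression
$$
\Omega \;=\; \Omega^{(\tilde\Ccal)}+\Omega^{(\hat\Ccal)}+d\beta_{\gamma_1}\wedge d\ell_{\gamma_1}+d\beta_{\gamma_2}\wedge d\ell_{\gamma_2},
$$
restricted to the linear constraints \eqref{constMC} that identify each boundary-length functional with the corresponding $\ell_{\gamma_j}$. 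It then remains to restrict $\Omega^{(\tilde\Ccal)}$ and $\Omega^{(\hat\Ccal)}$ to the constraint locus and to simplify.

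First I would dispose of the trinion contribution $\Omega^{(\hat\Ccal)}$. For $\hat\Ccal$ the three boundary lengths are $(\ell_1,\ell_2,\ell_3)=(\ell_{\gamma_1},\ell_{\gamma_2},\ell_{\gamma_2})$, so the trinion relations \eqref{trincons} reduce to $\zeta_{\hat e_1}=\tfrac12(-\ell_{\gamma_1}+2\ell_{\gamma_2})$ and $\zeta_{\hat e_2}=\zeta_{\hat e_3}=\tfrac12\ell_{\gamma_1}$. Writing $\Omega^{(\hat\Ccal)}$ out from \eqref{Oi} with the vertex orderings \eqref{edgeordering} gives $d\zeta_{\hat e_3}\wedge d\zeta_{\hat e_2}+d\zeta_{\hat e_1}\wedge d\zeta_{\hat e_3}+d\zeta_{\hat e_2}\wedge d\zeta_{\hat e_1}$; the first summand vanishes since $d\zeta_{\hat e_2}=d\zeta_{\hat e_3}$, and the remaining two cancel. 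Equivalently, this is the observation that the single-trinion form $d\ell_1\wedge d\ell_2+d\ell_2\wedge d\ell_3+d\ell_3\wedge d\ell_1$ of \eqref{Omtrin} degenerates to zero when two of its three arguments coincide.

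For the torus piece, $\Omega^{(\tilde\Ccal)}=2\,d\zeta_{\tilde e_1}\wedge d\zeta_{\tilde e_2}+2\,d\zeta_{\tilde e_1}\wedge d\zeta_{\tilde e_3}+2\,d\zeta_{\tilde e_2}\wedge d\zeta_{\tilde e_3}$ is the same expression that appeared in the one-contour proof of \eqref{M201C}, the coefficient $2$ reflecting that every edge of the one-vertex triangulation of the capped torus returns twice to $\tilde v$. Solving the constraint $\ell_{\gamma_1}=2(\zeta_{\tilde e_1}+\zeta_{\tilde e_2}+\zeta_{\tilde e_3})$ for $d\zeta_{\tilde e_1}=\tfrac12 d\ell_{\gamma_1}-d\zeta_{\tilde e_2}-d\zeta_{\tilde e_3}$ and substituting, the two induced copies of $d\zeta_{\tilde e_2}\wedge d\zeta_{\tilde e_3}$ cancel the explicit $2\,d\zeta_{\tilde e_2}\wedge d\zeta_{\tilde e_3}$ against one of them and leave
$$
\Omega^{(\tilde\Ccal)}\big|_{\text{constr.}}=2\,d\zeta_{\tilde e_2}\wedge d\zeta_{\tilde e_3}+d\ell_{\gamma_1}\wedge(d\zeta_{\tilde e_2}+d\zeta_{\tilde e_3}).
$$
Adding this to the vanishing $\Omega^{(\hat\Ccal)}|_{\text{constr.}}$ and to the canonical $d\beta_{\gamma_j}\wedge d\ell_{\gamma_j}$ terms yields exactly the claimed formula.

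I expect no serious technical obstacle: the argument is bookkeeping on top of Theorem \ref{MC}. The only non-routine observation is the collapse of the trinion contribution when two of its boundary lengths are identified, which is the precise mechanism by which all three shear variables of $\hat\Ccal$ disappear from the final Darboux list, leaving only the Fenchel--Nielsen style pair $(\ell_{\gamma_2},\beta_{\gamma_2})$ to encode that piece.
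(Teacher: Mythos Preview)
Your argument is correct and follows essentially the same route as the paper's proof: compute the torus and trinion contributions separately, impose the length constraints, and observe that the trinion piece collapses because two of its boundary lengths coincide. The only cosmetic difference is that you invoke Theorem \ref{MC} directly (so the amalgamated toric variables $\beta_{\gamma_1},\beta_{\gamma_2}$ appear from the outset), whereas the paper carries the individual half-toric variables $\beta_{\t\gamma_1},\beta_{\h\gamma_1},\beta_{\h\gamma_2},\beta_{\h\gamma_3}$ through the computation and combines them via \eqref{betadef2} only at the end; also your labelling of which trinion boundary is $\gamma_1$ differs from the paper's, but this is immaterial since the trinion form is cyclically symmetric and vanishes whenever two lengths agree.
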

\begin{figure}[ht!]
  \centering
\includegraphics{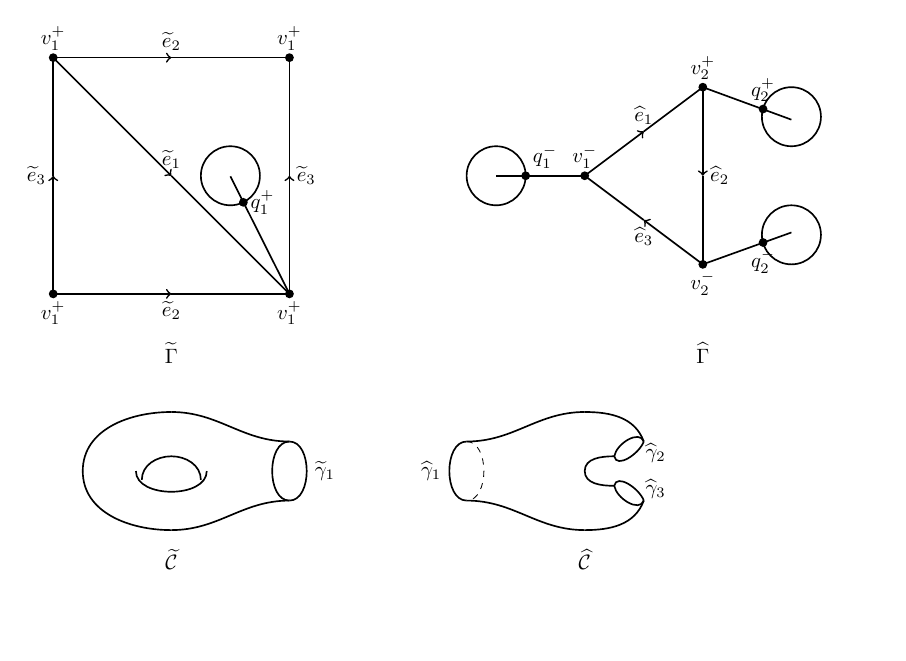}
\caption{Two contour splitting for $ \V_{2}$ and associated graphs.}
\label{fig:202c}
\end{figure}
\begin{proof}
    As in the previous example we still have (the contribution from $ q_1^+$ is now given by $ 2 \mathrm{d} \beta_{\t \g_1} \wedge  \mathrm{d} \len_{\t \g_1} $):
    \begin{equation} \label{Om22C1}
         \Omega(\t \Gamma)=2 \mathrm{d}  \zeta_{\t e_1} \wedge \mathrm{d}  \zeta_{\t e_2}+2 \mathrm{d}  \zeta_{\t e_1} \wedge \mathrm{d}  \zeta_{\t e_3}+2 \mathrm{d}  \zeta_{\t e_2} \wedge \mathrm{d}  \zeta_{\t e_3}+
          2 \mathrm{d} \beta_{\t \g_1} \wedge  \mathrm{d} \len_{\t \g_1} \; ,
    \end{equation}
 the vertices $ v_1^-$,  $ v_2^-$, $ v_2^+$ of $ \h \Gamma $ (together with twist-length type contributions coming from $ q_1^-$,  $ q_2^-$, $ q_2^+$) contribute:
    \begin{equation} \label{Om22C2}
        \Omega(\h \Gamma)= \mathrm{d}  \zeta_{\h e_3} \wedge \mathrm{d}  \zeta_{\h e_1} + \mathrm{d}  \zeta_{\h e_1}  \wedge \mathrm{d}  \zeta_{\h e_2} + \mathrm{d}  \zeta_{\h e_2} \wedge \mathrm{d}  \zeta_{\h e_3}+ 2\mathrm{d} \beta_{\h \g_1} \wedge \mathrm{d} \len_{\h \g_1}  + 2\mathrm{d} \beta_{\h \g_2} \wedge  \mathrm{d} \len_{\h \g_2} + 2\mathrm{d} \beta_{\h \g_3} \wedge \mathrm{d} \len_{\h \g_3}\;.
    \end{equation}
    The constraints for $ \gamma_1$ reads (mod $\pi i$):
    \begin{align*}
        2( \zeta_{\t e_1}+ \zeta_{\t e_2}+ \zeta_{\t e_3}) &= \len_{\t \g_1} \; ,\qquad
         \zeta_{\h e_1}+  \zeta_{\h e_3} = \len_{\h \g_1} \;,\qquad
        \len_{\t \g_1} =  \len_{\h \g_1} = \len_{ \g_1} \;,
    \end{align*}
    and the constraints for the non-separating contour $ \gamma_2$ are given by:
    \begin{align*}
         \zeta_{\h e_1} + \zeta_{\h e_2} &= \len_{\h \g_2} \;,\qquad 
         \zeta_{\h e_3} +  \zeta_{\h e_2} = \len_{\h \g_3}\;,\qquad
        \len_{\h \g_2} =  \len_{\h \g_3} = \len_{\g_2} \;.
    \end{align*}
    Hence,
    \begin{align*}
         \zeta_{\t e_1} &=\frac{1}{2} \len_{\g_1}- \zeta_{\t e_2}-  \zeta_{\t e_3} \;, \hspace{0.7cm}  \zeta_{\h e_2} = \len_{\g_2} -\frac{1}{2} \len_{\g_1} \;, \\
         \zeta_{\h e_1} & = \frac{1}{2} \len_{\g_1} \;, \hspace{0.7cm} \zeta_{\h e_3}  =\frac{1}{2}\len_{\g_1} \;.
    \end{align*}
    Inserting the constraints in (\ref{Om22C1})  and (\ref{Om22C2}) we obtain:
    \begin{align*}
        \Omega(\t \Gamma)&= 2 \mathrm{d}  \zeta_{\t e_2}  \wedge \mathrm{d}  \zeta_{\t e_3} + \mathrm{d} \len_{\g_1} \wedge \left(\mathrm{d}  \zeta_{\t e_2} + \mathrm{d}  \zeta_{\t e_3} \right)+ 2\mathrm{d} \beta_{\t \g_1} \wedge \mathrm{d} \len_{\g_1} \;,  \\
        \Omega(\h \Gamma) &= 2\mathrm{d} \beta_{\h \g_1} \wedge \mathrm{d} \len_{\g_1} + 2\mathrm{d} \beta_{\h \g_2} \wedge \mathrm{d} \len_{\g_2}  +   2\mathrm{d} \beta_{\h \g_3}  \wedge \mathrm{d} \len_{\g_2} \;.
    \end{align*}
    Summing up $\Omega(\t \Gamma) $ and $ \Omega(\h \Gamma)$ while setting $ \beta_{\g_1}= 2( \beta_{\t \g_1}+\beta_{\h \g_1})$ and $ \beta_{\g_2} = 2(\beta_{\h \g_2}+\beta_{\h \g_3})$ we get the expression of Proposition \ref{M202C}.
\end{proof}

\subsubsection{Three separating  contours: trinion decomposition}
In this situation, we get complete decompositions into trinions. There are three possible ribbon trinion graphs $ \t {\mathfrak T}, \t{ \mathfrak T'} $ and $  \h {\mathfrak T}$ are shown in Fig.\ref{V2TD}. The log-canonical coordinates associated to $ \t {\mathfrak T}$ (resp. $ \t {\mathfrak T'},\  \h {\mathfrak T}$)  are denoted $ \{ \len_{\t {\mathfrak e_i}}, \beta_{\t {\mathfrak e_i}} \}_{i=1}^3$ (resp. $ \{ \len_{\h{\mathfrak e_i}}, \beta_{\h{\mathfrak e_i}} \}_{i=1}^3$ $ \{ \len_{\t{\mathfrak e'_i}}, \beta_{\t{\mathfrak e'_i}} \}_{i=1}^3$). The two ribbon graphs  $ \t {\mathfrak T}, \t{ \mathfrak T'} $  are related by the change of cyclic ordering of the edges at one of the two trinions.

\begin{prop}
The expression for the Goldman  form $ \omega_{G} $ on $ \V_{2}$ looks as follws  in the  coordinates $ \{ \len_{\t {\mathfrak e_i}}, \beta_{\t  {\mathfrak e_i}} \}_{i=1}^3$, $ \{ \len_{\t {\mathfrak e'_i}}, \beta_{\t  {\mathfrak e'_i}} \}_{i=1}^3$, $ \{ \len_{\h  {\mathfrak e_i}}, \beta_{\h  {\mathfrak e_i}} \}_{i=1}^3$ corresponding to the different trinion graphs  $ \t {\mathfrak T}, \t{ \mathfrak T'} , \h {\mathfrak T}$, respectively: 
\begin{itemize}
    \item $ \Omega(\t{\mathfrak T})= \mathrm{d} \beta_{\t  {\mathfrak e_1}} \wedge \mathrm{d} \len_{\t  {\mathfrak e_1}}   + \mathrm{d} \beta_{\t  {\mathfrak e_2}} \wedge \mathrm{d} \len_{\t  {\mathfrak e_2}}   + \mathrm{d} \beta_{\t  {\mathfrak e_3}} \wedge \mathrm{d} \len_{\t  {\mathfrak e_3}}  $\;, \\
    \item $ \Omega(\t{\mathfrak T'})=2\Big[ \d \len_{\t  {\mathfrak e'_3}} \wedge \d \len_{\t  {\mathfrak e'_2}} +  \d \len_{\t  {\mathfrak e'_1}} \wedge \d \len_{\t  {\mathfrak e'_3}} +  \d \len_{\t  {\mathfrak e'_2}} \wedge \d \len_{\t  {\mathfrak e'_1}}\Big]+ 
     \mathrm{d} \beta_{\t  {\mathfrak e'_1}} \wedge \mathrm{d} \len_{\t  {\mathfrak e'_1}}   + \mathrm{d} \beta_{\t  {\mathfrak e'_2}} \wedge \mathrm{d} \len_{\t  {\mathfrak e'_2}}   + \mathrm{d} \beta_{\t  {\mathfrak e'_3}} \wedge \mathrm{d} \len_{\t  {\mathfrak e'_3}}  $\;, \\
    \item $ \Omega(\h {\mathfrak T})= \mathrm{d} \beta_{\h  {\mathfrak e_1}} \wedge \mathrm{d} \len_{\h  {\mathfrak e_1}}   + \mathrm{d} \beta_{\h  {\mathfrak e_2}} \wedge \mathrm{d} \len_{\h  {\mathfrak e_2}}   + \mathrm{d} \beta_{\h  {\mathfrak e_3}} \wedge \mathrm{d} \len_{\h  {\mathfrak e_3}}  $\;. 
\end{itemize}
\end{prop}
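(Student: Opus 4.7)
The strategy is a direct application of Theorem \ref{TheoTD} to each of the three ribbon trinion graphs, with the only work being the bookkeeping of the gluing constraints and the cyclic orderings at the two vertices. In each case the decomposition of $\Ccal$ into $2g-2=2$ trinions $\mathcal T^{(1)},\mathcal T^{(2)}$ uses $3g-3=3$ loops, and the $\beta$--$\ell$ part of $\Omega$ simply reads $\sum_{i=1}^{3}\mathrm{d}\beta_{\mathfrak e_i}\wedge\mathrm{d}\ell_{\mathfrak e_i}$ in all three cases. The point is therefore to analyse the length--length contribution
\[
\Omega_{\text{lg}}(\mathfrak T)\;=\;\sum_{j=1}^{2}\bigl(\mathrm{d}\ell^{(j)}_{1}\wedge\mathrm{d}\ell^{(j)}_{2}+\mathrm{d}\ell^{(j)}_{2}\wedge\mathrm{d}\ell^{(j)}_{3}+\mathrm{d}\ell^{(j)}_{3}\wedge\mathrm{d}\ell^{(j)}_{1}\bigr)
\]
after imposing $\mathrm{d}\ell^{(j)}_{a}=\mathrm{d}\ell_{\mathfrak e}$ whenever the $a$-th boundary of $\mathcal T^{(j)}$ is glued to the edge $\mathfrak e$ of $\mathfrak T$.

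For $\tilde{\mathfrak T}$ I take the theta graph (two vertices, three parallel edges $\tilde{\mathfrak e}_1,\tilde{\mathfrak e}_2,\tilde{\mathfrak e}_3$) with the cyclic orderings at the two vertices compatible with the orientation of $\Ccal$, i.e.\ reversed relative to each other; this forces the identifications $\ell^{(1)}_a=\ell_{\tilde{\mathfrak e}_a}$ and $\ell^{(2)}_a=\ell_{\tilde{\mathfrak e}_{\sigma(a)}}$ where $\sigma$ is the transposition swapping $2$ and $3$. Substituting into $\Omega_{\text{lg}}$ and grouping terms in pairs of the form $\mathrm{d}\ell_{\tilde{\mathfrak e}_i}\wedge\mathrm{d}\ell_{\tilde{\mathfrak e}_j}+\mathrm{d}\ell_{\tilde{\mathfrak e}_j}\wedge\mathrm{d}\ell_{\tilde{\mathfrak e}_i}$, each pair vanishes by antisymmetry, so $\Omega_{\text{lg}}(\tilde{\mathfrak T})=0$, giving the first formula. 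The graph $\tilde{\mathfrak T}'$ is the same theta graph but with the cyclic order of the three edges at one of the two trinions flipped, so now both trinions contribute the same expression in terms of the common variables $\ell_{\tilde{\mathfrak e}'_a}$; the two contributions add rather than cancel, producing $\Omega_{\text{lg}}(\tilde{\mathfrak T}')=2\sum_{a=1}^{3}\mathrm{d}\ell_{\tilde{\mathfrak e}'_{a+1}}\wedge\mathrm{d}\ell_{\tilde{\mathfrak e}'_{a}}$ (indices mod $3$), which matches the second formula.

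For $\hat{\mathfrak T}$ I take the handcuff graph: two trinions each carrying a loop edge $\hat{\mathfrak e}_1$ (resp.\ $\hat{\mathfrak e}_3$) identifying two of its own boundaries, plus a single connecting edge $\hat{\mathfrak e}_2$. Explicitly, $\ell^{(1)}_1=\ell^{(1)}_2=\ell_{\hat{\mathfrak e}_1}$, $\ell^{(1)}_3=\ell_{\hat{\mathfrak e}_2}=\ell^{(2)}_3$ and $\ell^{(2)}_1=\ell^{(2)}_2=\ell_{\hat{\mathfrak e}_3}$. In each trinion the summand $\mathrm{d}\ell^{(j)}_{1}\wedge\mathrm{d}\ell^{(j)}_{2}$ becomes $\mathrm{d}\ell_{\hat{\mathfrak e}_i}\wedge\mathrm{d}\ell_{\hat{\mathfrak e}_i}=0$, while the remaining two summands become $\mathrm{d}\ell_{\hat{\mathfrak e}_i}\wedge\mathrm{d}\ell_{\hat{\mathfrak e}_2}+\mathrm{d}\ell_{\hat{\mathfrak e}_2}\wedge\mathrm{d}\ell_{\hat{\mathfrak e}_i}=0$, so each trinion contributes $0$ to $\Omega_{\text{lg}}$ and only the three $\mathrm{d}\beta\wedge\mathrm{d}\ell$ terms survive.

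The one genuinely delicate point is simply the consistency of the orientations/cyclic orders, and the fact that the same pants decomposition admits different choices of ribbon structure which are related by the toric shift \eqref{logtoricshift}; the identity \eqref{savinggrace} guarantees that the three formulas describe the same two-form $\Omega$ on $\V_2$ after performing the corresponding change of toric variables. Once these bookkeeping issues are settled the three identities follow immediately from Theorem \ref{TheoTD}.
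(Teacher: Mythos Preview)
Your proposal is correct and follows essentially the same approach as the paper: a direct application of Theorem~\ref{TheoTD} to each of the three ribbon trinion graphs, tracking how the length--length part of \eqref{Omtrin} behaves under the gluing constraints, and invoking the toric shift \eqref{logtoricshift} together with \eqref{savinggrace} to reconcile $\Omega(\t{\mathfrak T})$ with $\Omega(\t{\mathfrak T}')$. The paper performs the same direct computations, only with slightly different bookkeeping of which boundary of each trinion is matched to which edge $\mathfrak e_i$; your explicit identification of $\t{\mathfrak T},\t{\mathfrak T}'$ as theta graphs with opposite versus equal cyclic orders, and of $\h{\mathfrak T}$ as the handcuff graph, makes the cancellation/doubling mechanism a bit more transparent than in the paper's line-by-line expansion, but the argument is the same.
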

\begin{figure}[ht!]
    \centering
    \includegraphics{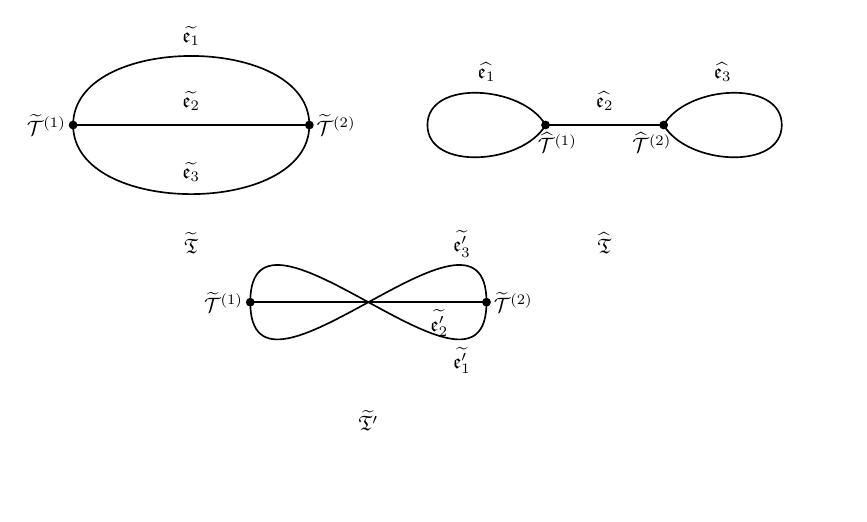}
    \caption{The ribbon  trinion graphs $  \t{\mathfrak T}, \t{\mathfrak T'}$ and $ \h{\mathfrak T}$.}
    \label{V2TD}
\end{figure}

\begin{proof}
    The proof is by direct application of Theorem \ref{TheoTD} to $ \t{\mathfrak T},  \t{\mathfrak T'}$ and $\h{\mathfrak T}$.

    Using formula (\ref{Omtrin}) for $\t{\mathfrak T}$ one can get:
    \begin{align*}
        &\Omega(\t{\mathfrak T}) =  \Big[ \d \len_{\t  {\mathfrak e_3}} \wedge \d \len_{\t  {\mathfrak e_2}} +  \d \len_{\t  {\mathfrak e_1}} \wedge \d \len_{\t  {\mathfrak e_3}} +  \d \len_{\t  {\mathfrak e_2}} \wedge \d \len_{\t  {\mathfrak e_1}}\Big]+  \Big[ \d \len_{\t  {\mathfrak e_1}} \wedge \d \len_{\t  {\mathfrak e_2}} + \\
        &+\d \len_{\t  {\mathfrak e_2}} \wedge \d \len_{\t  {\mathfrak e_3}} +  \d \len_{\t  {\mathfrak e_3}} \wedge \d \len_{\t  {\mathfrak e_1}} \Big] + \d \beta_{\t  {\mathfrak e_1}} \wedge  \d \len_{\t  {\mathfrak e_1}} +  \d \beta_{\t  {\mathfrak e_2}} \wedge  \d \len_{\t  {\mathfrak e_2}} +  \d \beta_{\t  {\mathfrak e_3}} \wedge  \d \len_{\t  {\mathfrak e_3}} \\
        &= \d \beta_{\t  {\mathfrak e_1}} \wedge  \d \len_{\t  {\mathfrak e_1}} +  \d \beta_{\t  {\mathfrak e_2}} \wedge  \d \len_{\t  {\mathfrak e_2}} +  \d \beta_{\t  {\mathfrak e_3}} \wedge  \d \len_{\t  {\mathfrak e_3}} \;.
    \end{align*}
    For $\t {\mathfrak T'}$ we obtain:
    \begin{align*}
        &\Omega(\t{\mathfrak T'}) =  \Big[ \d \len_{\t  {\mathfrak e'_3}} \wedge \d \len_{\t  {\mathfrak e'_2}} +  \d \len_{\t  {\mathfrak e'_1}} \wedge \d \len_{\t  {\mathfrak e'_3}} +  \d \len_{\t  {\mathfrak e'_2}} \wedge \d \len_{\t  {\mathfrak e'_1}}\Big]
        + 
\Big[ \d \len_{\t  {\mathfrak e'_2}} \wedge \d \len_{\t  {\mathfrak e'_1}} + \\
        &+\d \len_{\t  {\mathfrak e'_3}} \wedge \d \len_{\t  {\mathfrak e'_2}} +  \d \len_{\t  {\mathfrak e'_1}} \wedge \d \len_{\t  {\mathfrak e'_3}} \Big] + \d \beta_{\t  {\mathfrak e'_1}} \wedge  \d \len_{\t  {\mathfrak e'_1}} +  \d \beta_{\t  {\mathfrak e'_2}} \wedge  \d \len_{\t  {\mathfrak e_2}} +  \d \beta_{\t  {\mathfrak e'_3}} \wedge  \d \len_{\t  {\mathfrak e'_3}} \\
        &= 2\Big[ \d \len_{\t  {\mathfrak e'_2}} \wedge \d \len_{\t  {\mathfrak e'_1}} +\d \len_{\t  {\mathfrak e'_3}} \wedge \d \len_{\t  {\mathfrak e'_2}} +  \d \len_{\t  {\mathfrak e'_1}} \wedge \d \len_{\t  {\mathfrak e'_3}} \Big]
        +  \d \beta_{\t  {\mathfrak e'_1}} \wedge  \d \len_{\t  {\mathfrak e'_1}} +  \d \beta_{\t  {\mathfrak e'_2}} \wedge  \d \len_{\t  {\mathfrak e'_2}} +  \d \beta_{\t  {\mathfrak e'_3}} \wedge  \d \len_{\t  {\mathfrak e'_3}} \;.
    \end{align*}
    The two forms $\Omega (\t{\mathfrak T})$ and $\Omega (\t{\mathfrak T'})$ coincide given that the corresponding  variables are related by $\len_{\t{\mathfrak e_j}} = \len_{\t{\mathfrak e_j'}} $ and 
\bea
\label{logtortrin}
\beta_{\t {e_1'}} = \beta_{\t {e_1}}  + \left(\len_{\t {e_3}}  -\len_{\t {e_1}} -\len_{\t {e_2} }\right), 
\cr
\beta_{\t {e_2'}} = \beta_{\t {e_2}}  +\left(\len_{\t {e_1}}  -\len_{\t {e_3}} -\len_{\t {e_2} }\right), 
\cr 
\beta_{\t {e_3'}} = \beta_{\t {e_3}}  +  \left(\len_{\t {e_2}}  -\len_{\t {e_1}} -\len_{\t {e_3} }\right).
\eea

    The relation  \eqref{logtortrin} (together with $\len_{\t{\mathfrak e_j}} =\len_{\t{\mathfrak e'_j}}$)  is a consequence of the relation \eqref{logtoricshift} together with the choice of labelling and  the definition of the amalgamated toric variable \eqref{betadef}, or equivalently \eqref{betadef2}. The formulas \eqref{logtortrin}  imply that  the forms $\Omega(\t{\mathfrak T'})$ and $\Omega(\t{\mathfrak T})$ coincide, using \eqref{savinggrace} (the factor of $2$ is coming from the definition of the amalgamated toric variable in \eqref{betadef2}).

    Similarly, for $ \h  {\mathfrak T}$:
     \begin{align*}
        & \Omega(\h{\mathfrak T}) =  \Big[ 
        \d \len_{\h  {\mathfrak e_2}} \wedge \d \len_{\h  {\mathfrak e_1}} +  
        \d \len_{\h  {\mathfrak e_1} }\wedge \d \len_{\h  {\mathfrak e_1}} +  \d \len_{\h  {\mathfrak e_1}} \wedge \d \len_{\h  {\mathfrak e_2}} \Big]+ \Big[ \d \len_{\h {\mathfrak  e_2}} \wedge \d \len_{\h  {\mathfrak e_3}} + \\
        & \d \len_{\h  {\mathfrak e_3}} \wedge \d \len_{\h  {\mathfrak e_3}} +  \d \len_{\h  {\mathfrak e_3}} \wedge \d \len_{\h  {\mathfrak e_2}}\Big] + \d \beta_{\h  {\mathfrak e_1}} \wedge  \d \len_{\h  {\mathfrak e_1}} +  \d \beta_{\h  {\mathfrak e_2}} \wedge  \d \len_{\h  {\mathfrak e_2}} +  \d \beta_{\h  {\mathfrak e_3}} \wedge  \d \len_{\h  {\mathfrak e_3}} \\
        &= \d \beta_{\h  {\mathfrak e_1}} \wedge  \d \len_{\h  {\mathfrak e_1}} +  \d \beta_{\h  {\mathfrak e_2}} \wedge  \d \len_{\h  {\mathfrak e_2}} +  \d \beta_{\h  {\mathfrak e_3}} \wedge  \d \len_{\h  {\mathfrak e_3}} \;.
    \end{align*}
\end{proof}

\end{document}